\newtheorem{theorem}{Theorem}
\newtheorem{corollary}[theorem]{Corollary}
\newtheorem{definition}[theorem]{Definition}
\newtheorem{lemma}[theorem]{Lemma}
\newtheorem{proposition}[theorem]{Proposition}
\newtheorem{remark}[theorem]{Remark}
\newenvironment{proof}[1][Proof]{\noindent\textbf{#1.} }{\ \rule{0.5em}{0.5em}}
\numberwithin{equation}{section}
\begin{document}

\def\E {{\mathbb E}}
\def\P {{\mathbb P}}
\def\R {{\mathbb R}}
\def\Z {{\mathbb Z}}
\def\N {{\mathbb N}}
\def\<{{\langle}}
\def\>{{\rangle}}
\def\eps{\varepsilon}
\def\wh{\widehat}

\renewcommand{\baselinestretch}{1.4}

\title{On The Sub-Mixed Fractional Brownian Motion}
\author{Charles El-Nouty  and Mounir Zili}
\maketitle

\begin{abstract}
Let $\{ S_t^H, \, t \geq 0 \} $ be a linear combination of a Brownian motion
and of an independent sub-fractional Brownian motion with Hurst index $0 < H
< 1$. Its main properties are studied and it is shown that $S^H $ can be
considered as an intermediate process between a sub-fractional Brownian
motion and a mixed fractional Brownian motion. Finally, we determine the
values of $H$ for which $\; S^H$ is not a semi-martingale.
\end{abstract}

\section{Introduction}

Let $\{ B_t^H, t \in \R \}$ be a fractional Brownian motion
(fBm) with Hurst index $0 <~H <~1$, i.e. a centered Gaussian process with
stationary increments satisfying $B_0^H = 0$, with probability 1, and $\E
{(B_t^H)}^2 = \; {\mid t \mid}^{2H}, t \in \R$. We obviously have 
for any real numbers $t$ and $s$
 
\begin{equation}  \label{eq:1}
cov \Bigl( B_t^H, B_s^H \Bigr) = \frac{1}{2} \; \Bigl( \mid t \mid^{2H} +
\mid s \mid^{2H} - \mid t - s \mid^{2H} \Bigr).
\end{equation}

\bigskip 
Consider $\{ B_t, t \in \R \}$ an independent Brownian motion
(Bm) and $(a,b)$ two real numbers such that $(a,b) \neq (0,0)$.

\bigskip
The mixed-fractional Brownian motion (mfBm) is an
extension of a Bm and a fBm. It was introduced in \cite{Cher} in order to solve
some problems in mathematical finance, such as modelling some
arbitrage-free and complete markets. The mfBm $\displaystyle M^H = \{
M_t^H(a,b) ; t \ge 0 \} = \{ M_t^H; t \ge 0 \} $ of parameters $a, b$ and $H$
is defined as follows:
 
\begin{equation*}
\forall t \in {\mathbb{R}}_+, \hspace{5mm} M_t^H = M_t^H(a,b) = a \; B_t + b
\; B_t^H. 
\end{equation*}

\noindent 
We refer also to \cite{CE} and \cite{MZ} for further information
on this process. Let us recall some of its main properties.

\bigskip

\begin{lemma} \label{L:1}
The mfBm $\; (M_t^H(a,b))_{t \in {\mathbb R}_+}$ satisfies the following
properties: 

\begin{itemize}

\item $M^H$ is a centered Gaussian process.
\item $ \forall s \in {\R}_+,  \forall t \in {\R}_+,$
\begin{equation*}
Cov \Big( M_t^H(a,b), M_s^H(a,b) \Big) ) = a^2 (t \wedge s) + \frac{b^2}{2} \Big( t^{2H} + s^{2H} - \mid t-s \mid ^{2H} \Big),
\end{equation*}
where $\displaystyle t \wedge s = \frac{1}{2} \Big( t+s-\mid t-s \mid \Big) .$
\item The increments of the mfBm are stationary.
\end{itemize}
\end{lemma}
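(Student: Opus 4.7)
The plan is to exploit the independence of $B$ and $B^H$ together with the elementary properties of these two Gaussian processes; nothing deep is required.

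First, to establish that $M^H$ is a centered Gaussian process, I would check that every finite-dimensional marginal $(M_{t_1}^H,\dots,M_{t_n}^H)$ is Gaussian. Since $(B_{t_1},\dots,B_{t_n},B_{t_1}^H,\dots,B_{t_n}^H)$ is a Gaussian vector in $\R^{2n}$ (independence of the two jointly Gaussian blocks makes the concatenation jointly Gaussian), and since each $M_{t_i}^H$ is a fixed linear combination $aB_{t_i}+bB_{t_i}^H$ of its coordinates, the image is again Gaussian. Centering is immediate from $\E B_t=\E B_t^H=0$.

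Next, for the covariance, I would compute
\begin{equation*}
Cov(M_t^H,M_s^H)=a^2Cov(B_t,B_s)+ab\,Cov(B_t,B_s^H)+ab\,Cov(B_t^H,B_s)+b^2Cov(B_t^H,B_s^H),
\end{equation*}
using bilinearity. The two cross terms vanish because $B$ and $B^H$ are independent and centered. One then substitutes $Cov(B_t,B_s)=t\wedge s$ for the Brownian part and formula (\ref{eq:1}) for the fractional part, yielding exactly the stated expression. The identity $t\wedge s=\tfrac12(t+s-|t-s|)$ is trivial.

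Finally, for stationary increments, since both $B$ and $B^H$ have stationary increments and are independent, the joint law of $(B_{t+h}-B_{s+h},B_{t+h}^H-B_{s+h}^H)$ is independent of $h$. Applying the linear map $(x,y)\mapsto ax+by$ preserves this property, so $M_{t+h}^H-M_{s+h}^H$ has a law independent of $h$. Alternatively, since we already know $M^H$ is a centered Gaussian process, it suffices to observe that
\begin{equation*}
\E\bigl[(M_{t+h}^H-M_{s+h}^H)^2\bigr]=a^2|t-s|+b^2|t-s|^{2H}
\end{equation*}
depends only on $t-s$, which follows directly from the covariance computed above.

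No step is really a serious obstacle; the only point where care is needed is justifying joint Gaussianity in step one (it is the independence of the two processes, not merely that each is Gaussian, that makes the stacked vector Gaussian). Everything else is bilinear bookkeeping.
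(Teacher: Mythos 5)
Your proof is correct. Note that the paper offers no proof of Lemma \ref{L:1} at all: it is simply recalled from the mfBm literature (Cheridito \cite{Cher}, El-Nouty \cite{CE}, Zili \cite{MZ}), so there is no argument of the authors to compare against; your verification — joint Gaussianity of the stacked vector $(B_{t_1},\dots,B_{t_n},B^H_{t_1},\dots,B^H_{t_n})$ via independence of the two Gaussian blocks, bilinearity of the covariance with vanishing cross terms, and the increment computation — is the standard one. One small point of care: your first stationarity argument, as written, treats only a single increment $M^H_{t+h}-M^H_{s+h}$; stationarity of the increments of the process requires the joint law of finitely many increments to be shift-invariant, which follows either by applying the same independence reasoning to vectors of increments of $B$ and $B^H$, or, more cleanly, by your second argument: for a centered Gaussian process the polarization identity $Cov(M^H_a-M^H_b,M^H_c-M^H_d)=\tfrac12\bigl(\E(M^H_a-M^H_d)^2+\E(M^H_b-M^H_c)^2-\E(M^H_a-M^H_c)^2-\E(M^H_b-M^H_d)^2\bigr)$ shows that the law of any finite family of increments is determined by the quantities $\E(M^H_u-M^H_v)^2=a^2|u-v|+b^2|u-v|^{2H}$, which depend only on $u-v$.
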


\bigskip

In \cite{TB}, the authors suggested a second extension of a Bm, called the
sub-fractional Brownian motion (sfBm), that preserves most of the properties
of the fBm, but not the stationarity of the increments. It is
the stochastic process $\displaystyle \xi^H = \{ \xi_t^H ; t \ge 0 \} $,
defined by:

\begin{equation}
\forall t \in {\mathbb{R}}_+, \hspace{5mm} \xi_t^H = \frac{B_t^H + B_{-t}^H}{\sqrt{2}} ,  \label{eq:4}
\end{equation}

This process arises from occupation time fluctuations of branching particle
systems with Poisson initial condition (see \cite{TB}). Let us state some
results on the sfBm.

\bigskip
\begin{lemma} \label{L:2}
The sfBm $\; (\xi_t^H)_{t \in {\mathbb R}_+}$ satisfies the following
properties:

\begin{itemize}
\item $\xi^H$ is a centered Gaussian process.
\item $ \displaystyle \forall s \in {\R }_+,  \forall t \in {\R }_+,$
\begin{equation*} 
Cov \Big( \xi_t^H, \xi_s^H \Big) ) = s^{2H} + t^{2H} - \frac{1}{2}  \Big( (s+t)^{2H} +  \mid t-s \mid ^{2H} \Big).
\end{equation*}

\item The increments of the smfBm are not stationary.
\end{itemize}
\end{lemma}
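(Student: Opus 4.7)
The plan is to verify the three assertions directly from the definition $\xi_t^H = (B_t^H + B_{-t}^H)/\sqrt{2}$ together with the fBm covariance formula (1.1). For the first bullet, the centered Gaussian property is immediate: every finite linear combination of the $\xi_t^H$ can be rewritten as a linear combination of the $B_{\pm t}^H$, and the fBm on $\R$ is a centered Gaussian process by hypothesis.

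For the covariance, I would expand bilinearly
\[
Cov(\xi_t^H, \xi_s^H) = \frac{1}{2}\bigl[Cov(B_t^H, B_s^H) + Cov(B_t^H, B_{-s}^H) + Cov(B_{-t}^H, B_s^H) + Cov(B_{-t}^H, B_{-s}^H)\bigr]
\]
and apply (1.1) to each of the four terms, using $|-t| = t$, $|-s| = s$, $|t-(-s)| = t+s$ and $|(-t)-(-s)| = |t-s|$ for $s,t \geq 0$. The two ``same-sign'' terms each contribute $\tfrac{1}{2}(t^{2H}+s^{2H}-|t-s|^{2H})$, while the two ``opposite-sign'' terms each contribute $\tfrac{1}{2}(t^{2H}+s^{2H}-(t+s)^{2H})$. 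Summing and multiplying by $\tfrac12$ yields the stated identity. This step is routine bookkeeping and presents no real obstacle.

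For the non-stationarity of the increments, the strategy is to compute the variance of $\xi_t^H - \xi_s^H$ from the covariance formula just established. Reading off $Var(\xi_t^H) = (2-2^{2H-1})\,t^{2H}$ from the diagonal $t=s$ and using the polarization identity, one obtains
\[
Var(\xi_t^H - \xi_s^H) = (t+s)^{2H} + (t-s)^{2H} - 2^{2H-1}(t^{2H}+s^{2H}),
\]
which manifestly depends on $t$ and $s$ separately and not only on $t-s$. To turn this observation into a rigorous non-stationarity statement I would exhibit two pairs with the same difference $t-s$ but distinct values of the right-hand side --- for instance $(t,s)=(2,1)$ against $(t',s')=(3,2)$ --- and verify that the equality between the two expressions fails for every $H \in (0,1) \setminus \{1/2\}$, which is consistent with the fact that $\xi^{1/2}$ coincides in law with a standard Brownian motion. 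This final verification is the only place requiring any attention, but it reduces to elementary monotonicity considerations for the functions $H \mapsto k^{2H}$ at the small integers $k$ involved.
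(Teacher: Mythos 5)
The paper itself offers no proof of this lemma (it is recalled from the sub-fractional Brownian motion literature), so a direct verification is the right thing to do, and your treatment of the first two bullets is correct and complete: Gaussianity is immediate, and the bilinear expansion of the covariance via (1.1), using $|t-(-s)|=t+s$ and $|(-t)-(-s)|=|t-s|$, gives exactly the stated formula. The gap is in the last step of the non-stationarity argument. Your variance formula $\mathrm{Var}\bigl(\xi_t^H-\xi_s^H\bigr)=(t+s)^{2H}+(t-s)^{2H}-2^{2H-1}\bigl(t^{2H}+s^{2H}\bigr)$ is right, but the claim that the pair $(2,1)$ versus $(3,2)$ yields different values for every $H\in\,]0,1[\,\setminus\{1/2\}$ does not reduce to ``elementary monotonicity considerations''. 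The difference of the two candidate variances equals $3^{2H}+\tfrac12\,6^{2H}-\tfrac12\,2^{2H}-5^{2H}$, a four-term exponential sum in the variable $2H$ which vanishes at $H=0$, $H=1/2$ and $H=1$; its nonvanishing on $]0,1/2[\,\cup\,]1/2,1[$ is true, but it requires a genuine argument (for instance the classical fact that an exponential sum $\sum_{i=1}^{4}c_i a_i^{x}$ with distinct bases has at most three real zeros, and $0,1,2$ already account for them), and no monotonicity property of $H\mapsto k^{2H}$ at the integers involved delivers it directly. As written, the decisive step of the third bullet is asserted rather than proved.

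A cleaner way to close it, consistent with the expansions the paper itself uses in Lemma 5: fix $d=t-s>0$ and let $s\to\infty$. A Taylor expansion gives $(2s+d)^{2H}-2^{2H-1}\bigl((s+d)^{2H}+s^{2H}\bigr)=O\bigl(s^{2H-2}\bigr)\to 0$, hence $\mathrm{Var}\bigl(\xi_{s+d}^H-\xi_s^H\bigr)\to d^{2H}$. If the increments were stationary, this variance would equal $\mathrm{Var}\bigl(\xi_d^H\bigr)=(2-2^{2H-1})\,d^{2H}$ for every $s$, forcing $2^{2H-1}=1$, i.e.\ $H=1/2$. This settles non-stationarity for all $H\neq 1/2$ without any numerical case-checking.
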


\bigskip
We can easily remark that, when $H= 1/2, \; \xi^{1/2}$ is a Bm.

\bigskip 
We refer to \cite{TB, CE1, TU} for further
information on this process.

\bigskip
In the spirit of \cite{TB} and \cite{MZ}, we introduce a new process, that
we will call the sub-mixed fractional Brownian motion (smfBm). More
precisely, the smfBm of parameters $a,b$ and $H$, is a process $\; S^H = \{ S_t^H(a,b) ; t \ge 0 \} = \{ S_t^H; t \ge 0 \} $,
defined by:

\begin{equation}
\forall t \in {\mathbb{R}}_+, \hspace{5mm} S_t^H = S_t^H(a,b) = \frac{a \; ( B_t
+ B_{-t} ) + b \; (B_t^H + B_{-t}^H )}{\sqrt{2}} = a \; \xi_t + b \ \xi_t^H,
\label{eq:6}
\end{equation}
\noindent where $\xi$ is a Bm, obviously independent of $\xi^H$.%
\newline

When $a = 0$ and $b = 1, \; S^H= \xi^H $ is a sfBm. When $a = 1$
and $b = 0, \; S^H = \xi $ is a Bm.

\bigskip 
So the smfBm is clearly an extension of the sfBm and the Bm. This
is the flavor of this process. We will show first that it has the same
properties as the sfBm. Then, we will prove that it has also some of the
main properties of the mfBm, but that its increments are not stationary; they are
more weakly correlated on non-overlapping intervals. Hence $S^H$ may be
considered as being intermediate between the sfBm and the mfBm. This is why
we call it the smfBm.

\bigskip
The aim of this paper is to study on one hand some key properties of
the smfBm and on the other hand its martingale properties. The motivation of
the authors is to measure the consequences of the lack of increments
stationarity. 

\bigskip
In section 2, the main properties of the smfBm are studied, namely:
\begin{itemize}

\item the mixed-self-similarity property (see \cite{MZ}),

\item the non Markovian property,

\item the increments non stationarity property,

\item the correlation coefficient and the influence of the parameters $a$ and $b$ on it, 

\item the comparison between the mfBm and the smfBm covariance
properties.

\end{itemize}

\noindent
 Finally it is shown in section 3 that the smfBm is a semi-martingale if and only if 
\begin{equation*}
b = 0 \; \; \; \mbox{or} \; \; \ H \, \in \, \{1/2\} \, \cup \, ]3/4,1[.
\end{equation*}

\section{Main properties}

\subsection{Basic properties}

The following lemmas describe the basic properties of the smfBm.

\begin{lemma} \label{L:3}
\textit{The smfBm $\displaystyle(S_{t}^{H}(a,b))_{t\in {\mathbb{R}}%
_{+}}$ satisfies the following properties: }

\begin{itemize}
\item \textit{$S^H $ is a centered Gaussian process.}

\item $\forall s\in {\mathbb{R}}_{+},\;\forall t\in {\mathbb{R}}_{+}$, 
\begin{equation}
\begin{array}{rcl}
&  & Cov\Big(S_{t}^{H}(a,b),S_{s}^{H}(a,b)\Big)=a^{2}\; \left( s\wedge
t\right) \\ 
\noalign{\vskip 3mm} & + & b^{2}\; \left( t^{2H}+s^{2H}-\frac{1}{2}\left(
\left( s+t\right) ^{2H}+\mid t-s\mid ^{2H}\right) \right) .%
\end{array}
\label{eq:7}
\end{equation}

\item 
\begin{equation}
\forall t\in {\mathbb{R}}_{+},\hspace{3mm}{\mathbb{E}}\Big(\big(%
S_{t}^{H}(a,b)\big)^{2}\Big)=a^{2}t+b^{2}\; \left( (2-2^{2H-1})\quad
t^{2H}\right) .  \label{eq:8}
\end{equation}
\end{itemize}
\end{lemma}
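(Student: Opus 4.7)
The plan is to derive everything directly from the definition $S_t^H = a\xi_t + b\xi_t^H$ together with the independence of $\xi$ and $\xi^H$, using Lemma \ref{L:2} for the sfBm piece and the classical covariance of Brownian motion for the $\xi$ piece.

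First, to show that $S^H$ is a centered Gaussian process, I would argue that for any finite collection $t_1,\dots,t_n \in \mathbb{R}_+$ the random vector $(S_{t_1}^H,\dots,S_{t_n}^H)$ is obtained as the sum of the two independent Gaussian vectors $a(\xi_{t_1},\dots,\xi_{t_n})$ and $b(\xi^H_{t_1},\dots,\xi^H_{t_n})$. Since both summands are centered Gaussian (Lemma \ref{L:2} and the fact that $\xi$ is a Bm), the sum is centered Gaussian as well, establishing the first bullet.

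Second, for the covariance formula, I would expand
\begin{equation*}
Cov\bigl(S_t^H,S_s^H\bigr) = Cov\bigl(a\xi_t + b\xi_t^H,\; a\xi_s + b\xi_s^H\bigr)
\end{equation*}
by bilinearity and use the independence of $\xi$ and $\xi^H$ to kill the two cross terms, leaving
\begin{equation*}
Cov\bigl(S_t^H,S_s^H\bigr) = a^2\, Cov(\xi_t,\xi_s) + b^2\, Cov(\xi_t^H,\xi_s^H).
\end{equation*}
Inserting $Cov(\xi_t,\xi_s) = s\wedge t$ and the sfBm covariance from Lemma \ref{L:2} yields exactly \eqref{eq:7}. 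For the variance identity \eqref{eq:8}, I would then specialize to $s=t$: the Brownian part contributes $a^2 t$, and the sfBm part collapses to $2t^{2H} - \tfrac{1}{2}\bigl((2t)^{2H} + 0\bigr) = (2 - 2^{2H-1})\, t^{2H}$, giving the claimed expression.

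There is no genuine obstacle here; the only point requiring a small amount of care is the algebraic simplification $2t^{2H} - \tfrac{1}{2}(2t)^{2H} = (2-2^{2H-1})t^{2H}$ and making sure the independence of $\xi$ and $\xi^H$ is invoked explicitly so that the cross-covariance terms vanish. Everything else is a direct substitution into the covariance identities already recorded in Lemma \ref{L:2}.
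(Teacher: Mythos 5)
Your proof is correct and follows the same route as the paper, which simply states that the lemma is a direct consequence of Lemma \ref{L:2}; you have merely written out the routine details (independence killing the cross terms, and the simplification $2t^{2H}-\tfrac{1}{2}(2t)^{2H}=(2-2^{2H-1})t^{2H}$), all of which check out.
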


\begin{proof}
It is a direct consequence of lemma \ref{L:2}.
\end{proof}

\noindent N{\scriptsize OTATION.} Let $(X_t)_{t \in {\mathbb{R}}_+}$ and $%
(Y_t)_{t \in {\mathbb{R}}_+}$ be two processes defined on the same
probability space $(\Omega , F, {\mathbb{P}})$. The notation $\{ X_t \} 
\overset{\Delta }{=} \{ Y_t \} $ will mean that $(X_t)_{t \in {\mathbb{R}}%
_+} $ and $(Y_t)_{t \in {\mathbb{R}}_+}$ have the same law.

\bigskip Let us check the mixed-self-similarity property of the smfBm, which
was introduced in \cite{MZ} in the mfBm case.

\begin{lemma} \label{L:4}
{\it For any $h > 0$, \hspace{2mm}
 $\displaystyle  \{ S_{ht}^H(a,b) \}  \stackrel{\Delta }{=}  \Big\{
 S_t^H \Big( a h^{1/2} ,   b h^{H} \Big) \Big\} .$ }
\end{lemma}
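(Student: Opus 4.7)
The plan is to reduce the distributional identity to an equality of covariance functions. By Lemma \ref{L:3} both processes $\{S_{ht}^H(a,b)\}$ and $\{S_t^H(ah^{1/2},bh^H)\}$ are centered Gaussian, so two Gaussian processes coincide in law if and only if their covariance functions agree. Hence it suffices to verify that
\begin{equation*}
Cov\Big(S_{ht}^H(a,b), S_{hs}^H(a,b)\Big) = Cov\Big(S_t^H(ah^{1/2},bh^H), S_s^H(ah^{1/2},bh^H)\Big)
\end{equation*}
for all $s,t \in {\mathbb R}_+$.

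First I would compute the left-hand side by substituting the times $ht$ and $hs$ into formula (\ref{eq:7}). The key observation is that the two building blocks of the covariance rescale homogeneously but with different exponents: the Brownian part $s \wedge t = \frac{1}{2}(s+t - |t-s|)$ is homogeneous of degree $1$ (so it pulls out a factor $h$), while the sub-fractional part $t^{2H}+s^{2H}-\frac{1}{2}((s+t)^{2H}+|t-s|^{2H})$ is homogeneous of degree $2H$ (so it pulls out a factor $h^{2H}$). Collecting the factors yields
\begin{equation*}
a^2 h \,(s \wedge t) + b^2 h^{2H} \left(t^{2H}+s^{2H}-\tfrac{1}{2}\big((s+t)^{2H}+|t-s|^{2H}\big)\right).
\end{equation*}

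Next I would compute the right-hand side, again applying (\ref{eq:7}) but this time with parameters $(ah^{1/2},bh^H)$ and times $(t,s)$; this simply replaces $a^2$ by $a^2h$ and $b^2$ by $b^2 h^{2H}$, producing exactly the same expression. This matching finishes the proof.

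There is no real obstacle here: the argument is a one-line homogeneity check made possible by the explicit covariance formula of Lemma \ref{L:3}, plus the standard fact that the law of a centered Gaussian process is determined by its covariance. The only point worth being careful about is the factor $h^{2H}$ (not $h^H$) coming out of the sub-fractional part, which is precisely why the second parameter must be scaled by $h^H$ rather than by some other power of $h$.
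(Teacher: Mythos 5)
Your proposal is correct and follows essentially the same route as the paper: both reduce the claim to equality of covariance functions of two centered Gaussian processes and then use the homogeneity of degree $1$ of $s\wedge t$ and of degree $2H$ of the sub-fractional part in the covariance formula of Lemma \ref{L:3}. Nothing is missing.
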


\begin{proof} For fixed $h>0$ , the processes $%
\{S_{ht}^{H}(a,b)\}$, and $\Big\{S_{t}^{H}\Big(ah^{1/2},bh^{H}\Big)\Big\}$
are centered Gaussian. Therefore, one has only to prove that they have the
same covariance function. We have for any $s$ and $t$ in ${\mathbb{R}}_{+}$:

\begin{equation*}
\begin{array}{rcl}
Cov\Big(S_{ht}^{H}(a,b),S_{hs}^{H}(a,b)\Big) & = & (h^{1/2}a)^{2}\;
\left( s\wedge t\right) \\ 
\noalign{\vskip 3mm} & + & (h^{H}b)^{2}\; \left( t^{2H}+s^{2H}-\frac{1}{2}%
\left( \left( s+t\right) ^{2H}+\mid t-s\mid ^{2H}\right) \right) \\ 
\noalign{\vskip 3mm} & = & Cov\Bigg(%
S_{t}^{H}(ah^{1/2},bh^{H}),S_{s}^{H}(ah^{1/2},bh^{H})\Bigg).%
\end{array}%
\end{equation*}%

\bigskip
This ends the proof of the lemma. 

\end{proof}

\begin{lemma} \label{L:5}
 \textit{For any $H\in \Big]0,1\Big[\setminus \Big\{\frac{1}{2}%
\Big\}
$, $a\in {\mathbb{R}}$ and $b\in {\mathbb{R}}^{\ast }$, $(S_{t}^{H}(a,b))_{t%
\in {\mathbb{R}}_{+}}$ is not a Markovian process. }
\end{lemma}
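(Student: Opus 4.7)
The plan is to apply the classical Doob criterion for centered Gaussian processes: if $R(s,t)=\mathrm{Cov}(X_s,X_t)$ with $R(t,t)>0$ for $t>0$, then $(X_t)_{t\ge 0}$ is Markovian if and only if
\begin{equation*}
R(s,u)\,R(t,t) \;=\; R(s,t)\,R(t,u),\qquad 0<s\le t\le u.
\end{equation*}
Using Lemma \ref{L:3}, I would decompose the covariance as $R=a^{2}\phi+b^{2}\psi$, where $\phi(s,t)=s\wedge t$ is the Bm covariance and
\begin{equation*}
\psi(s,t)=t^{2H}+s^{2H}-\tfrac{1}{2}\bigl((s+t)^{2H}+|t-s|^{2H}\bigr)
\end{equation*}
is the sfBm covariance (Lemma \ref{L:2}), and then expand the difference $R(s,u)R(t,t)-R(s,t)R(t,u)$. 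Since the Bm itself is Markovian, $\phi$ already satisfies Doob's identity, so the $a^{4}$-coefficient vanishes identically, leaving
\begin{equation*}
a^{2}b^{2}\,\Delta_{1}(s,t,u)+b^{4}\,\Delta_{2}(s,t,u),
\end{equation*}
where $\Delta_{2}=\psi(s,u)\psi(t,t)-\psi(s,t)\psi(t,u)$ is the sfBm's own discrepancy and $\Delta_{1}$ is the natural cross term $\phi(s,u)\psi(t,t)+\psi(s,u)\phi(t,t)-\phi(s,t)\psi(t,u)-\psi(s,t)\phi(t,u)$.

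From here the argument splits into two cases. When $a=0$ and $b\ne 0$, $S^{H}$ reduces to $b\,\xi^{H}$, and it suffices to exhibit one triple $0<s<t<u$ at which $\Delta_{2}\ne 0$, confirming that the sfBm alone is not Markovian for $H\ne 1/2$; a direct calculation at, say, $(s,t,u)=(1,2,3)$ works. When $a\ne 0$ and $b\ne 0$, assuming for contradiction that $S^{H}(a,b)$ is Markovian forces $a^{2}\Delta_{1}(s,t,u)+b^{2}\Delta_{2}(s,t,u)\equiv 0$ on $\{s\le t\le u\}$, i.e.\ $\Delta_{1}$ and $\Delta_{2}$ would have to be proportional as functions of $(s,t,u)$ with constant ratio $-(b/a)^{2}$. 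I would then exhibit two triples $(s_{i},t_{i},u_{i})$, $i=1,2$, at which the vectors $\bigl(\Delta_{1}(s_{i},t_{i},u_{i}),\Delta_{2}(s_{i},t_{i},u_{i})\bigr)$ are linearly independent in $\R^{2}$, contradicting that putative proportionality.

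The main obstacle is the concrete verification that two such triples exist for every $H\in(0,1)\setminus\{1/2\}$, which amounts to checking that an explicit $2\times 2$ determinant, viewed as a function of $H$ through the powers $2^{2H},3^{2H},4^{2H},\dots$, vanishes only at $H=1/2$. A convenient simplification is offered by the mixed-self-similarity Lemma \ref{L:4}: since the Markov property at parameters $(a,b)$ is equivalent to that at $(ah^{1/2},bh^{H})$ for every $h>0$, one may normalize (for instance, to $b=1$) and reduce to a one-parameter family in $a\in\R$, so that only the $H$-dependence of a scalar expression in $2^{2H},3^{2H},4^{2H}$ remains to be analyzed, e.g.\ by expansion and a monotonicity argument in $H$.
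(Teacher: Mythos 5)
Your overall strategy is sound as a plan: Doob's covariance factorization for centered Gaussian processes, the decomposition $R=a^{2}\phi+b^{2}\psi$, the observation that the $a^{4}$-term vanishes because the Bm covariance already factorizes, and the reduction to showing that no nonzero pair $(a^{2},b^{2})$ can annihilate $a^{2}\Delta_{1}+b^{2}\Delta_{2}$ identically (the linear-independence trick would indeed handle $a=0$ and $a\neq0$ simultaneously). But there is a genuine gap exactly where you flag the ``main obstacle'': the decisive verification is never done. The claim that $\Delta_{2}(1,2,3)\neq0$ for every $H\in\,]0,1[\,\setminus\{1/2\}$, and the claim that for every such $H$ there exist two triples at which the vectors $\bigl(\Delta_{1},\Delta_{2}\bigr)$ are linearly independent, are statements about transcendental functions of $H$ (combinations of $2^{2H},3^{2H},4^{2H},\dots$) whose nonvanishing on a whole interval is precisely the hard analytic content of the lemma; ``expansion and a monotonicity argument'' is named but not carried out, and the mixed-self-similarity normalization only removes a scale parameter, not the $H$-dependence. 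As written, the argument establishes the conclusion for no particular $H\neq\frac{1}{2}$.

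The paper sidesteps this difficulty by choosing the test points asymptotically rather than fixing them: with the same factorization identity $Cov(S_{s}^{H},S_{u}^{H})\,Cov(S_{t}^{H},S_{t}^{H})=Cov(S_{s}^{H},S_{t}^{H})\,Cov(S_{t}^{H},S_{u}^{H})$, it fixes $s$, sets $u=e^{t}$ and lets $t\to\infty$; Taylor expansions show the sfBm correction terms are negligible, and the identity then forces $(a^{2}s+b^{2}s^{2H})\,b^{2}\bigl(1-2^{2H-1}\bigr)t^{2H}$ to tend to $0$, which is impossible unless $2-2^{2H-1}=1$, i.e.\ $H=\frac{1}{2}$. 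In that limit the whole $H$-dependence collapses to one elementary equation, which is exactly the check your fixed-triple scheme leaves open. To complete your version you would have to exhibit explicit triples and prove the resulting $2\times2$ determinant is nonzero for all $H\in\,]0,1[\,\setminus\{1/2\}$ (or at least that, for each such $H$, some admissible triple gives a nonzero value of $a^{2}\Delta_{1}+b^{2}\Delta_{2}$); alternatively, replace the fixed triples by an asymptotic regime as the paper does.
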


\begin{proof} By lemma \ref{L:3}, $S^{H}$ is a centered
Gaussian process such that $\mathbb{E}\left( S_{t}^{H}\right) ^{2}>0$ for
all $t>0$. Then, if $S^{H}$ were a Markovian process, according to \cite{Re},
for all $0<s<t<u$ we would have:

\begin{equation}
Cov\Big(S_{s}^{H},S_{u}^{H}\Big)Cov\Big(S_{t}^{H},S_{t}^{H}\Big)=Cov\Big(%
S_{s}^{H},S_{t}^{H}\Big)Cov\Big(S_{t}^{H},S_{u}^{H}\Big).  \label{eq:9}
\end{equation}

We get by lemma  \ref{L:3},

\begin{eqnarray*}
Cov\Big(S_{s}^{H},S_{t}^{H}\Big) &=&a^{2}s+b^{2}s^{2H}+b^{2}\left( t^{2H}-%
\frac{1}{2}\left( t+s\right) ^{2H}-\frac{1}{2}\left( t-s\right) ^{2H}\right)
, \\
Cov\Big(S_{t}^{H},S_{t}^{H}\Big) &=&a^{2}t+b^{2}\left( 2-2^{2H-1}\right)
\quad t^{2H}, \\
Cov\Big(S_{t}^{H},S_{u}^{H}\Big) &=&a^{2}t+b^{2}t^{2H}+b^{2}\left( u^{2H}-%
\frac{1}{2}\left( u+t\right) ^{2H}-\frac{1}{2}\left( u-t\right) ^{2H}\right)
, \\
Cov\Big(S_{s}^{H},S_{u}^{H}\Big) &=&a^{2}s+b^{2}s^{2H}+b^{2}\left( u^{2H}-%
\frac{1}{2}\left( u+s\right) ^{2H}-\frac{1}{2}\left( u-s\right) ^{2H}\right)
.
\end{eqnarray*}

Let $s$ be fixed and set $u=e^{t}$. When $ t \rightarrow + \infty,$ Taylor expansions yield

\begin{equation*}
t^{2H}-\frac{1}{2}\left( t+s \right) ^{2H}-\frac{1}{2}\left( t-s\right)
^{2H}=-H\left( 2H-1\right) \frac{s^{2}}{t^{2-2H}}+o\left( \frac{s^{2}}{%
t^{2-2H}}\right) ,
\end{equation*}%
\noindent
and
\begin{equation*}
u^{2H}-\frac{1}{2}\left( u+t\right) ^{2H}-\frac{1}{2}\left( u-t\right)
^{2H}=-H\left( 2H-1\right) \frac{t^{2}}{e^{(2-2H)t}}+o\left( \frac{t^{2}}{%
e^{(2-2H)t}}\right).
\end{equation*}

\bigskip
\noindent
Therefore, for $(h,x) \in \{ (s,t), (t,u), (s,u) \}$,
\begin{equation*}
\underset{x\rightarrow \infty }{\lim }\left( x^{2H}-\frac{1}{2}\left(
x+h\right) ^{2H}-\frac{1}{2}\left( x-h\right) ^{2H}\right) =0.
\end{equation*}

To verify (\ref{eq:9}), a necessary condition is that, when $b\neq 0$,

\begin{equation*}
\underset{t\rightarrow \infty }{\lim }\left( Cov\Big(S_{s}^{H},S_{u}^{H}\Big)%
Cov\Big(S_{t}^{H},S_{t}^{H}\Big)-Cov\Big(S_{s}^{H},S_{t}^{H}\Big)Cov\Big(%
S_{t}^{H},S_{u}^{H}\Big)\right) =0,
\end{equation*}%
that is

\begin{equation*}
\left( a^{2}s+b^{2}s^{2H}\right) \underset{t\rightarrow \infty }{\lim }%
\left( \left( a^{2}t+b^{2}\left( 2-2^{2H-1}\right) t^{2H}\right) -\left(
a^{2}t+b^{2}t^{2H}\right) \right) =0.
\end{equation*}

The last equality is satisfied when 
\begin{equation*}
2-2^{2H-1}=1\Leftrightarrow H=\frac{1}{2}.
\end{equation*}

\bigskip The proof of lemma \ref{L:5} is complete. 
\end{proof}

\bigskip

\begin{proposition} \label{p:6}  \textit{Second moment of increments:}\newline
We have for all $(s,t)\in {\mathbb{R}}_{+}^{2},$ $s\leq t$,

\begin{itemize}
\item 
\begin{equation}
\begin{array}{rcl}
&  & E \Big( S_t^H(a,b) - S_s^H(a,b) \Big) ^2 = a^2 (t -s) \\ 
\noalign{\vskip 2mm} & + & b^2 \Bigg( - 2^{2H-1} (t^{2H} + s^{2H}) +
(t+s)^{2H} + (t -s)^{2H} \Bigg).%
\end{array}
\label{eq11}
\end{equation}

\item 
\begin{equation}
a^2(t-s) + b^2 \gamma (t-s)^{2H} \le E \Big( S_t^H(a,b) - S_s^H(a,b) \Big) %
^2 \le a^2 (t-s) + b^2 \nu (t-s)^{2H},  \label{eq12}
\end{equation}
where 
\begin{equation*}
\gamma = \left\{ 
\begin{array}{rcl}
\displaystyle 2- 2^{2H-1} & if & \displaystyle H > \frac{1}{2}, \\ 
\noalign{\vskip 2mm} \displaystyle 1 & if & \displaystyle H \le \frac{1}{2},
\\ 
&  & 
\end{array}
\right.  
\end{equation*}
and 
\begin{equation*}
\nu = \left\{ 
\begin{array}{rcl}
\displaystyle 1 & if & \displaystyle H \ge \frac{1}{2}, \\ 
\noalign{\vskip 2mm} \displaystyle 2- 2^{2H-1} & if & \displaystyle H < 
\frac{1}{2} . \\ 
&  & 
\end{array}
\right. 
\end{equation*}
\end{itemize}
\end{proposition}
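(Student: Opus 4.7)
The plan is to use the independence of the Brownian part $\xi$ and the sub-fractional part $\xi^H$ in (\ref{eq:6}) to split the increment variance, reducing the problem to a sharp two-sided bound on the sfBm increment. For the formula (\ref{eq11}), write $S_t^H(a,b) - S_s^H(a,b) = a(\xi_t-\xi_s) + b(\xi_t^H-\xi_s^H)$; independence kills the cross term in the second moment and leaves $a^2(t-s) + b^2\,E(\xi_t^H-\xi_s^H)^2$. Expanding $E(\xi_t^H-\xi_s^H)^2 = E(\xi_t^H)^2 + E(\xi_s^H)^2 - 2\,Cov(\xi_t^H,\xi_s^H)$ and substituting the expressions from Lemma \ref{L:2}, the sum of variances contributes $(2-2^{2H-1})(t^{2H}+s^{2H})$ and $-2\,Cov$ contributes $-2(t^{2H}+s^{2H}) + (t+s)^{2H} + (t-s)^{2H}$; adding these produces the content of (\ref{eq11}).

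For the bounds (\ref{eq12}) it suffices to sandwich $E(\xi_t^H-\xi_s^H)^2$ between $\gamma(t-s)^{2H}$ and $\nu(t-s)^{2H}$. With the change of variables $u = t-s$, $v = t+s$ (so $0\le u\le v$), the sfBm increment variance rewrites as
\[
u^{2H} \;+\; \Bigl[\,v^{2H} - \tfrac{1}{2}\bigl((v+u)^{2H}+(v-u)^{2H}\bigr)\,\Bigr].
\]
The bracketed quantity is the negative of a symmetric second difference of $x\mapsto x^{2H}$ at $v$, so its sign is fixed by the convexity of $x^{2H}$: nonpositive for $H\ge 1/2$, nonnegative for $H\le 1/2$. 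This immediately gives the upper bound with $\nu=1$ when $H\ge 1/2$ and the lower bound with $\gamma=1$ when $H\le 1/2$.

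The main obstacle is the complementary pair of bounds, where the coefficient is $2-2^{2H-1}=E(\xi_1^H)^2$. Setting $v=1$ and $w=u/v\in[0,1]$, they reduce to showing
\[
\Psi(w) \;:=\; 1 \,+\, (2^{2H-1}-1)\,w^{2H} \,-\, \tfrac{1}{2}\bigl[(1+w)^{2H}+(1-w)^{2H}\bigr] \;\ge\; 0
\qquad \text{for } H>1/2,
\]
with the reversed inequality for $H<1/2$. One checks $\Psi(0)=\Psi(1)=0$, so the sign must be forced in the open interval, and a naive convexity comparison fails because both terms constituting $\Psi$ are simultaneously convex for $H>1/2$ (and concave for $H<1/2$). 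I would handle this by direct analysis of $\Psi''$: its bracketed factor $2(2^{2H-1}-1)w^{2H-2} - (1+w)^{2H-2} - (1-w)^{2H-2}$ is strictly monotone in $w$ (a one-line derivative check), passing from $+\infty$ at $w=0^+$ to $-\infty$ at $w=1^-$, so $\Psi''$ changes sign exactly once. Combined with $\Psi'(0)=0$ and $\Psi'(1)=-H(2-2^{2H-1})<0$, this forces $\Psi'$ to have a unique interior zero; hence $\Psi$ has a single interior maximum, and the boundary values $\Psi(0)=\Psi(1)=0$ yield $\Psi\ge 0$ on $[0,1]$. The case $H<1/2$ is symmetric, and combining the resulting sfBm bounds with the Brownian part $a^2(t-s)$ produces (\ref{eq12}).
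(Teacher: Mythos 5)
Your argument is correct, and for the hard half of \eqref{eq12} it is genuinely different from the paper's. The identity \eqref{eq11} and the coefficient-$1$ bounds are obtained exactly as in the paper: independence of $\xi$ and $\xi^H$ plus Lemma \ref{L:2}, and the convexity/concavity of $x\mapsto x^{2H}$ applied to a symmetric second difference -- after your substitution $u=t-s$, $v=t+s$ your bracketed term equals $2^{2H}A(s,t)$, the quantity the paper introduces in \eqref{eq15}--\eqref{eq16}. For the bounds with constant $2-2^{2H-1}$, the paper fixes $s$, sets $x=t-s$, and proves in Lemma \ref{L:7} that $f(x)=-2^{2H-1}((x+s)^{2H}+s^{2H})+(x+2s)^{2H}-(1-2^{2H-1})x^{2H}$ keeps a fixed sign by showing $f(0)=0$ and that $f$ is monotone, via the auxiliary function $g$ and the ratio $\ell(x)=(s+x)/(2s+x)$. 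You instead use homogeneity of degree $2H$ to normalize $t+s=1$ and study the single function $\Psi$ on $[0,1]$ with $\Psi(0)=\Psi(1)=0$; for $H>1/2$ your sign-change count checks out: the bracket in $\Psi''$ decreases strictly (its derivative is $(2H-2)$ times a positive quantity, since $(1-w)^{2H-3}\ge(1+w)^{2H-3}$), runs from $+\infty$ to $-\infty$, and together with $\Psi'(0)=0$ and $\Psi'(1)=-H(2-2^{2H-1})<0$ this forces a single interior maximum, hence $\Psi\ge 0$. Your route trades the paper's two-parameter monotonicity lemma for a boundary-value argument on a compact interval, which is arguably cleaner; the paper's version yields the extra monotonicity information of Lemma \ref{L:7}, which is not used elsewhere.

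One point to fix in the write-up: the case $H<1/2$ is not literally \emph{symmetric}. There $\Psi'(0^{+})=-\infty$, and the bracket in $\Psi''$ tends to $-\infty$ at both endpoints, so the ``unique sign change from $+\infty$ to $-\infty$'' scheme does not transfer verbatim. It is in fact simpler: for $H<1/2$ all three terms of $2(2^{2H-1}-1)w^{2H-2}-(1+w)^{2H-2}-(1-w)^{2H-2}$ are negative while $H(2H-1)<0$, so $\Psi''>0$ on $(0,1)$; strict convexity together with $\Psi(0)=\Psi(1)=0$ gives $\Psi\le 0$ directly, which is the reversed inequality you need. With that sentence spelled out, the proof is complete.
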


\begin{proof} Equality (\ref{eq11}) is a direct
consequence of equalities (\ref{eq:7}) and (\ref{eq:8}). So let us check the
inequalities (\ref{eq12}). Setting 
\begin{equation}
A(s,t)=\Bigg(\frac{t+s}{2}\Bigg)^{2H}-\frac{t^{2H}+s^{2H}}{2},  \label{eq15}
\end{equation}%
we can write 
\begin{equation}
E\Big(S_{t}^{H}(a,b)-S_{s}^{H}(a,b)\Big)^{2}-a^{2}(t-s)=b^{2}\Bigg(%
(t-s)^{2H}+2^{2H}A(s,t)\Bigg).  \label{eq16}
\end{equation}

We get by convexity that, if $H\leq \frac{1}{2}$, then $A(s,t)\geq 0$ and
consequently 
\begin{equation}
a^{2}(t-s)+b^{2}(t-s)^{2H}\leq E\Big(S_{t}^{H}(a,b)-S_{s}^{H}(a,b)\Big)^{2},
\label{eq17}
\end{equation}%
and if $\displaystyle H\geq \frac{1}{2}$, then $A(s,t)\leq 0$ and
consequently 
\begin{equation}
a^{2}(t-s)+b^{2}(t-s)^{2H}\geq E\Big(S_{t}^{H}(a,b)-S_{s}^{H}(a,b)\Big)^{2}.
\label{eq18}
\end{equation}

To complete the proof of proposition \ref{p:6}, we need a technical
lemma.

\bigskip
\begin{lemma} \label{L:7} Consider, for any $s>0$, the function $f$ defined as follows 
\begin{equation*}
f(x)=-2^{2H-1}((x+s)^{2H}+s^{2H})+(x+2s)^{2H}-(1-2^{2H-1})\ x^{2H},\quad
x\geq 0.
\end{equation*}
If $H<\frac{1}{2}$, $f$ is a negative decreasing function, whereas, if $H>%
\frac{1}{2}$, $f$ is a positive increasing one.
\end{lemma}

\begin{proof} (of lemma \ref{L:7} ) It is clear that 
$\ f(0)=0$. We get for $x>0$

\begin{equation*}
f^{\prime }\left( x\right) =H\ x^{2H-1}g(x),
\end{equation*}

\noindent where

\begin{equation*}
g(x) = -2^{2H} \Big( \frac{s}{x} + 1 \Big) ^{2H-1} + 2 \Big( \frac{2s}{x} +
1 \Big) ^{2H-1} - (2-2^{2H}).
\end{equation*}

We have 
\begin{equation*}
g^{\prime }(x) = \frac{(2H-1)s}{x^2} \Bigg( 2^{2H} \Big( \frac{s}{x} + 1 %
\Big) ^{2H-2} -4 \Big( \frac{2s}{x} + 1 \Big) ^{2H-2} \Bigg) .
\end{equation*}

Let us consider the two following cases:\newline

\textit{Case $1$:} $H<\frac{1}{2}$. Since $2H-1<0$, $2-2^{2H}>0$ and
consequently 
\begin{equation*}
\lim_{x\rightarrow 0^{+}}g(x)=-(2-2^{2H})<0
\; \; \; \mbox{and} \; \; \; \lim_{x\rightarrow +\infty}g(x)= 0.
\end{equation*}%
Set 
\begin{equation*}
\ell (x)=\frac{s+x}{2s+x}=\frac{\frac{s}{x}+1}{\frac{2s}{x}+1}.
\end{equation*}%
Since $\ell $ increases from $\frac{1}{2}$ to $1$, $\ell ^{2H-2}$ decreases
from $2^{2-2H}$ to $1$. Then $ \; 
\ell (x)^{2H-2}\leq~2^{2-2H} $, which is equivalent to 
$$
 2^{2H}\Big(\frac{s}{x}+1\Big)%
^{2H-2}-4\Big(\frac{2s}{x}+1\Big)^{2H-2}\leq 0,
$$
\noindent
and consequently $g^{\prime }(x)\geq 0$. Since $g$ increases from $%
-(2-2^{2H})$ to $0$, $g(x)\leq 0$ and therefore $f^{\prime }(x)\leq 0$.
Hence $f$ decreases and $f(x)\leq 0$.\newline

\textit{Case $2$:} $H>\frac{1}{2}$. Following the same lines as in case $1$,
we get $g^{\prime }(x)\leq 0$. Since the function $g$ decreases from $%
-(2-2^{2H})$ to $0$, $f$ increases and $f(x)\geq 0$. This completes the
proof of lemma \ref{L:7}. 

\end{proof}

Combining $(\ref{eq17})$ and $(\ref{eq18})$ with (\ref{eq16}) and lemma %
\ref{L:7}, we complete the proof of proposition \ref{p:6}. 

\end{proof}

\begin{remark} As a consequence of proposition \ref{p:6}, we
insist on the fact that the smfBm does not have stationary increments, but
this property is replaced by inequalities (\ref{eq12}).
\end{remark}


\subsection{Study of the correlation coefficient of the smfBm increments}

N{\scriptsize OTATION.} Let $\displaystyle X$ and $\displaystyle Y$ be two
random variables defined on the same probability space $(\Omega , F, {%
\mathbb{P}})$ such that $ V(X) \times V(Y) \neq 0$. We denote the correlation coefficient $\rho (X,Y)$ by: 
\begin{equation*}
\rho (X,Y) = \frac{ Cov (X,Y)}{ \sqrt{V(X)} \sqrt{V(Y)}} .
\end{equation*}

\begin{lemma} \label{L:8}
We have for $a \in {\mathbb R}, b \in {\mathbb R}^* , s \in {\mathbb R}_+, t \in {\mathbb R}_+$ and  
 $ h \in 
{\mathbb R}_+ $ such that $\displaystyle 0 < h \le t-s$,

\begin{equation}
 \rho \Big( S_{t+h}^H - S_t^H, 
S_{s+h}^H - S_s^H \Big)  
= \frac{ \gamma (s,t,h)}{ \sqrt{ \Big( 2\frac{a^2}{b^2} h + \alpha (s,h) \Big) \Big( 2\frac{a^2}{b^2} h + \alpha (t,h) \Big) }},   
\label{eq19}
\end{equation}
\noindent
where
\begin{equation*} 
 \begin{array}{rcl}
\displaystyle \gamma (s,t,h) &=& \displaystyle  \Bigg( (t-s + h )^{2H}   - 2 (t-s )^{2H} 
+ ( t-s -h )^{2H}  \\
\noalign{\vskip 3mm}
& - & \displaystyle    (t+s) ^{2H}   + 2 (t+s+h) ^{2H} 
- ( t+s+2h ) ^{2H} \Bigg) ,
\end{array} 
\end{equation*} 
and $ \; \alpha (s,h) =  -2^{2H} \Bigl( (s+h)^{2H} + s^{2H} \Bigr) + 2 \,(2s+h)^{2H} + 2 h^{2H} .$

\end{lemma}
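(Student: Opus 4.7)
The plan is to compute the numerator and denominator of $\rho$ separately from Lemma \ref{L:3} and Proposition \ref{p:6}, then simplify. The smfBm being a centered Gaussian process, the variances in the denominator are simply $E(S_{u+h}^H - S_u^H)^2$ for $u \in \{s,t\}$, while the covariance in the numerator expands, by bilinearity, into four terms of the form $Cov(S_x^H, S_y^H)$ with $x,y \in \{s, s+h, t, t+h\}$, to which equation (\ref{eq:7}) applies.

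First I would rewrite the variance formula (\ref{eq11}) in a form that isolates the factor $2a^2/b^2$: for $u \geq 0$,
\begin{equation*}
E(S_{u+h}^H - S_u^H)^2 = a^2 h + b^2\Big( -2^{2H-1}((u+h)^{2H} + u^{2H}) + (2u+h)^{2H} + h^{2H}\Big) = \frac{b^2}{2}\Big(2\tfrac{a^2}{b^2}h + \alpha(u,h)\Big),
\end{equation*}
which gives the two factors in the square root of (\ref{eq19}), each multiplied by $b^2/2$.

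Next I would compute the covariance. Because of the hypothesis $0 < h \leq t-s$, we have $s \leq s+h \leq t \leq t+h$, so every $\wedge$-term appearing below is unambiguous and the four $a^2$-contributions telescope:
\begin{equation*}
a^2\big((s+h) - s - (s+h) + s\big) = 0,
\end{equation*}
and all "pure" $b^2 x^{2H}$ terms (not of the form $(x\pm y)^{2H}$) likewise cancel in pairs. What remains are the six terms
\begin{equation*}
\tfrac{b^2}{2}\Big[(t-s+h)^{2H} - 2(t-s)^{2H} + (t-s-h)^{2H} - (t+s+2h)^{2H} + 2(t+s+h)^{2H} - (t+s)^{2H}\Big] = \tfrac{b^2}{2}\gamma(s,t,h),
\end{equation*}
which I would obtain by grouping the surviving $\pm\tfrac{1}{2}(\cdot)^{2H}$ contributions according to whether their argument involves $t-s$ or $t+s$.

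Finally, dividing the numerator $\tfrac{b^2}{2}\gamma(s,t,h)$ by the product $\tfrac{b^2}{2}\sqrt{(2\tfrac{a^2}{b^2}h+\alpha(s,h))(2\tfrac{a^2}{b^2}h+\alpha(t,h))}$ yields (\ref{eq19}). The only potentially delicate step is the bookkeeping in the covariance expansion, namely verifying that exactly the $a^2$ contribution and the free $b^2 x^{2H}$ contributions cancel while the $\tfrac{1}{2}(x\pm y)^{2H}$ contributions recombine into $\gamma$; this is routine but requires careful tracking of signs and the use of $s+h \leq t$ so that $|t+h-s|=t+h-s$, $|t-s-h|=t-s-h$, etc.
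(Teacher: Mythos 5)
Your proposal is correct and follows essentially the same route as the paper: the denominator comes from the increment-variance formula (\ref{eq11}) rewritten as $\frac{b^2}{2}\bigl(2\frac{a^2}{b^2}h+\alpha(u,h)\bigr)$, and the numerator is identified as $\frac{b^2}{2}\gamma(s,t,h)$. The only cosmetic difference is that the paper removes the Brownian contribution to the covariance by invoking independence of Bm increments over the non-overlapping intervals (which is where $0<h\le t-s$ enters) and then applies the sfBm covariance of Lemma \ref{L:2}, whereas you let the $a^2(x\wedge y)$ terms cancel in the bilinear expansion of (\ref{eq:7}) --- the same hypothesis doing the same work in either case.
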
	  

\bigskip
\begin{proof} 
We have by equality (\ref{eq11}) 

\begin{equation} \label{eq100}
\begin{array}{rcl}
\E \Bigl( S_{t+h}^H - S_t^H \Bigr)^2 &=& a^2 \, h + b^2 \; \Biggl( - 2^{2H-1} \; \Bigl( (t+h)^{2H} + t^{2H} \Bigr) \\
                                     &+& (2t+h)^{2H} + h^{2H} \Biggr)\\
                                     &=& a^2 \, h + \frac{b^2}{2} \; \alpha(t,h). 
\end{array}
\end{equation}

\bigskip
Recall that a Bm has independent increments and that the processes $\xi^H$ and $\xi$ are independent. Then, we have

\bigskip
\noindent
$ Cov \Big( S_{t+h}^H - S_t^H, 
S_{s+h}^H - S_s^H \Big)  
= 
b^2 \;
Cov \Big( \xi_{t+h}^H - \xi_t^H, 
\xi_{s+h}^H - \xi_s^H \Big)$,

\bigskip
\noindent
and we get by using lemma \ref{L:2}
 
\begin{equation} 
\label{eq101}
Cov \Big( S_{t+h}^H - S_t^H, S_{s+h}^H - S_s^H \Big) = \frac{b^2}{2} \; \gamma(s,t,h).
\end{equation}

\bigskip
Combining (\ref{eq100}) with (\ref{eq101}), we complete the proof of lemma \ref{L:8}.

\end{proof}

\bigskip
\begin{corollary}  \label{C:9} Let 
$ a \in {\mathbb R}$  and 
$ b \in {\mathbb R}^*$. Then, the increments of 
$\displaystyle (S_t^H(a,b))_{t \in {\mathbb R}_+}$  are positively
correlated for $\displaystyle \frac{1}{2} < H < 1$, uncorrelated for 
$\displaystyle H = \frac{1}{2} $, and negatively correlated for
$\displaystyle 0 < H < \frac{1}{2} $.
\end{corollary}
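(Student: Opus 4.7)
The plan is to use Lemma~\ref{L:8}: since the denominator in (\ref{eq19}) is strictly positive whenever $b\neq 0$ and $h>0$ (both variances under the square root are positive by (\ref{eq100})), the sign of $\rho$ coincides with that of the numerator $\gamma(s,t,h)$. The task thus reduces to determining the sign of $\gamma(s,t,h)$ for every admissible triple, i.e.\ every $s\ge 0$ and $0<h\le t-s$.

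Next, I would recognize $\gamma(s,t,h)$ as a difference of two symmetric second differences of $\phi(x)=x^{2H}$:
\[
\gamma(s,t,h) = \bigl[\phi(t-s+h)-2\phi(t-s)+\phi(t-s-h)\bigr] - \bigl[\phi(t+s+2h)-2\phi(t+s+h)+\phi(t+s)\bigr].
\]
Two applications of the fundamental theorem of calculus give the identity
\[
\phi(x+h)-2\phi(x)+\phi(x-h) = \int_0^h\!\int_0^h \phi''(x-h+u+v)\,du\,dv,
\]
which, once substituted into each bracket above, yields
\[
\gamma(s,t,h) = \int_0^h\!\int_0^h \bigl[\phi''(t-s-h+u+v) - \phi''(t+s+u+v)\bigr]\,du\,dv.
\]

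Finally, I would read off the sign. For every $(u,v)\in[0,h]^2$ one has $t-s-h+u+v < t+s+u+v$, because $s\ge 0$ and $h>0$; both arguments are nonnegative since $h\le t-s$. Using $\phi''(x)=2H(2H-1)\,x^{2H-2}$: if $H>1/2$, then $\phi''$ is positive and strictly decreasing on $(0,\infty)$, so the integrand is a.e.\ positive and $\gamma(s,t,h)>0$; if $H<1/2$, then $\phi''$ is negative and strictly increasing, so the integrand is a.e.\ negative and $\gamma(s,t,h)<0$; if $H=1/2$, then $\phi''\equiv 0$ and $\gamma(s,t,h)=0$, which recovers the independence of Brownian increments.

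The only real obstacle is the algebraic bookkeeping to cast $\gamma$ in this clean double-integral form, after which a single monotonicity argument for $\phi''$ handles both non-trivial cases simultaneously. A minor subtlety is the endpoint $h=t-s$: there $t-s-h=0$, so $\phi''$ blows up at the corner $u=v=0$ when $H<1$, but this affects only a measure-zero set in the square, leaving the integral well-defined and the sign conclusion intact.
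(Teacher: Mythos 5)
Your proof is correct and follows essentially the same route as the paper: both reduce the corollary via Lemma~\ref{L:8} to the sign of $\gamma(s,t,h)$ and both recognize $\gamma(s,t,h)$ as the difference $f(t-s)-f(t+s+h)$ of the symmetric second difference $f(x)=(x+h)^{2H}-2x^{2H}+(x-h)^{2H}$ of $x\mapsto x^{2H}$ evaluated at the two centers $t-s<t+s+h$. The only (minor, and perfectly valid) difference is how the comparison is made: the paper differentiates $f$ and uses the convexity/concavity of $x\mapsto x^{2H-1}$ to get the monotonicity of $f$, while you write each second difference as $\int_0^h\!\int_0^h \phi''(\cdot+u+v)\,du\,dv$ with $\phi(x)=x^{2H}$ and use the sign and monotonicity of $\phi''$, also handling the integrable endpoint singularity at $h=t-s$.
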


\bigskip

\begin{proof} Let us write the function $\gamma $
given in \eqref{eq19} as 
\begin{equation*}
\displaystyle \gamma (s,t,h) = f(t-s) - f(t+s+h),
\end{equation*}
where $f: x \longmapsto (x+h)^{2H} - 2 x^{2H} + (x-h)^{2H}$.
We have for every $x > 0$ 
\begin{equation*}
f^{\prime }(x) = 2H \Big( (x+h)^{2H-1} - 2 x^{2H-1} + (x-h)^{2H-1} \Big) .
\end{equation*}

The study of the convexity of the function $\displaystyle x \longmapsto x^{2H-1}$ 
enables us to determine the sign of $f^{\prime }$ and therefore the monotony of $f$. 
This ends the proof of corollary~\ref{C:9}.

\end{proof}

\bigskip
As a direct consequence of lemma \ref{L:8}, we get the following corollary.

\bigskip
\begin{corollary} \label{C:10}
Assume that $ b \neq 0$. Then, 
$ \mid   \rho \Big( S_{t+h}^H - S_t^H, 
S_{s+h}^H - S_s^H \Big)  
    \mid$ is a decreasing function of $ \frac{a^2}{b^2} $.
\end{corollary}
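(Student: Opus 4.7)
The plan is to read off the dependence on $a^{2}/b^{2}$ directly from the explicit formula furnished by Lemma \ref{L:8} and reduce the claim to the monotonicity of an elementary one-variable function.

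First, set $u=a^{2}/b^{2}\in {\mathbb R}_{+}$, which is legitimate since $b\neq 0$. By Lemma \ref{L:8}, we have
\begin{equation*}
\Big| \rho \Big( S_{t+h}^{H}-S_{t}^{H},\; S_{s+h}^{H}-S_{s}^{H}\Big)\Big|
\;=\; \frac{|\gamma(s,t,h)|}{\sqrt{\Phi(u)}}, \qquad
\Phi(u)\;=\;\Bigl(2uh+\alpha(s,h)\Bigr)\Bigl(2uh+\alpha(t,h)\Bigr).
\end{equation*}
The numerator $|\gamma(s,t,h)|$ is manifestly independent of $u$, so everything reduces to showing that $\Phi$ is a non-decreasing function of $u$ on $[0,+\infty)$. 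It then suffices to prove that each of the two linear factors $u\mapsto 2uh+\alpha(s,h)$ and $u\mapsto 2uh+\alpha(t,h)$ is itself non-negative and non-decreasing.

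Non-decreasingness of each factor is immediate: both are affine in $u$ with slope $2h>0$. Non-negativity is where one invokes a result already established: the identity (\ref{eq100}) obtained in the proof of Lemma \ref{L:8}, namely
\begin{equation*}
\E\Bigl(S_{r+h}^{H}-S_{r}^{H}\Bigr)^{2}\;=\;a^{2}h+\frac{b^{2}}{2}\alpha(r,h)\;=\;\frac{b^{2}}{2}\bigl(2uh+\alpha(r,h)\bigr),
\end{equation*}
applied with $r=s$ and $r=t$, shows that $2uh+\alpha(s,h)$ and $2uh+\alpha(t,h)$ are proportional (with positive coefficient $2/b^{2}$) to actual variances, hence are both $\geq 0$. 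Multiplying two non-negative non-decreasing functions of $u$ yields a non-decreasing product $\Phi$, and therefore $|\rho|=|\gamma|/\sqrt{\Phi}$ is a non-increasing function of $u=a^{2}/b^{2}$, which is exactly the assertion of the corollary.

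There is no genuine obstacle in this argument: the entire content of the statement is already compressed into the formula of Lemma \ref{L:8}. The only point that deserves a careful word (and which I would make explicit in the written-out proof) is the positivity of the two factors in $\Phi$, because one might be tempted to differentiate $\Phi$ blindly and worry about sign issues; tying these factors back to variances via (\ref{eq100}) sidesteps that concern cleanly.
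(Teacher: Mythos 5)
Your proof is correct and follows exactly the route the paper intends: the paper states Corollary \ref{C:10} as a direct consequence of the formula \eqref{eq19} of Lemma \ref{L:8}, with no further argument, and your write-up simply makes that reading-off explicit (numerator independent of $u=a^2/b^2$, each factor in the denominator affine in $u$ with slope $2h>0$ and nonnegative because it is proportional to a variance via \eqref{eq100}). Nothing is missing.
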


\bigskip
Thus, to model some phenomena, we can choose the parameters $ H, a $ and $b$ in such a manner that 
$ \{ S_t^H(a,b), \, t \geq 0 \}$ yields a good model, taking the sign and the level of correlation of the 
phenomenon of interest into account. For example, let us assume that the parameters $ H$ and $a$ are known with $ H > 1/2$, and 
$ b \neq 0$ is not known. Combining corollary \ref{C:9} with corollary \ref{C:10}, we obtain that
the correlation of the increments of $S_H$ increases with $\mid b \mid$.


\subsection{Some comparisons between mfBm and smfBm}

Set for any $\displaystyle s, t > 0$ 
\begin{equation*}
R_H(s,t) = Cov \Big( M_t^H(a,b), M_s^H(a,b) \Big) \hspace{2mm} \mathrm{and } 
\hspace{2mm} C_H(s,t) = Cov \Big( S_t^H(a,b), S_s^H(a,b) \Big).
\end{equation*}

Let us compare $R_H$ and $C_H$.

\bigskip
\begin{lemma} \label{L:12} 
\begin{itemize}
\item $\displaystyle C_H(s,t) \geq 0.$ 
\item If $\displaystyle H > \frac{1}{2}, $ $\displaystyle C_H(s,t) < R_H(s,t)$.
\item If $\displaystyle H = \frac{1}{2}, $ $\displaystyle C_{1/2}(s,t) = R_{1/2}(s,t)$.
\item If $\displaystyle H < \frac{1}{2}, $ $\displaystyle C_H(s,t) > R_H(s,t) $.

\end{itemize}

\end{lemma}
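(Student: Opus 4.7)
My plan is to derive all four items of Lemma \ref{L:12} from a single algebraic manipulation of the covariance formulas, combined with a short sign analysis of $x\mapsto x^{2H}$. Starting from the expressions in Lemmas \ref{L:1} and \ref{L:3} and noting that the $a^{2}(s\wedge t)$ terms are identical in $R_H$ and $C_H$, a one-line subtraction produces
\begin{equation*}
C_H(s,t) - R_H(s,t) \; = \; \frac{b^{2}}{2}\bigl(t^{2H} + s^{2H} - (s+t)^{2H}\bigr).
\end{equation*}
The second, third, and fourth bullets then follow from the classical super/sub-additivity of the power function on $[0,\infty)$: since $x^{2H}$ vanishes at $0$, for $2H>1$ it is convex and hence superadditive, yielding $C_H<R_H$; for $2H=1$ it is linear, yielding $C_{1/2}=R_{1/2}$; and for $2H<1$ it is concave and hence subadditive, yielding $C_H>R_H$, all with strict inequalities whenever $s,t>0$ and $H\neq\tfrac12$ (and $b\neq 0$).

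For the first bullet, $C_H(s,t)\ge 0$, the $a^{2}(s\wedge t)$ contribution is automatic. What remains is the non-negativity of the sfBm covariance
\begin{equation*}
D_H(s,t) \; := \; t^{2H} + s^{2H} - \frac{(s+t)^{2H} + |t-s|^{2H}}{2},
\end{equation*}
which is the expression for $\mathrm{Cov}\bigl(\xi_t^H,\xi_s^H\bigr)$ already recorded in Lemma \ref{L:2}; this is part of the list of properties of the sfBm established in \cite{TB} and can be invoked directly. If a self-contained argument is preferred, I would use the spectral representation $x^{2H}=c_H\!\int_{\mathbb R}(1-\cos x\lambda)\,|\lambda|^{-1-2H}\,d\lambda$, valid for $0<H<1$ with some $c_H>0$, together with the identity $\tfrac12\cos((s+t)\lambda)+\tfrac12\cos((t-s)\lambda)=\cos(s\lambda)\cos(t\lambda)$, to obtain the factorization
\begin{equation*}
D_H(s,t) \; = \; c_H\int_{\mathbb R}\frac{\bigl(1-\cos(s\lambda)\bigr)\bigl(1-\cos(t\lambda)\bigr)}{|\lambda|^{1+2H}}\,d\lambda \; \ge \; 0.
\end{equation*}

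The main obstacle is this pointwise positivity: the three comparison items reduce transparently to superadditivity/subadditivity of $x^{2H}$, but the non-negativity of $D_H$ does not follow from any immediate convexity property of $x^{2H}$ on $[0,\infty)$, and requires either the spectral factorization above or an explicit appeal to \cite{TB}. Once $D_H\ge 0$ is in hand, the proof of Lemma \ref{L:12} is essentially one algebraic line plus the standard super/sub-additivity dichotomy.
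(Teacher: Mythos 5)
Your argument is correct, and the comparison bullets follow the same one-line subtraction as the paper (the paper writes the difference as $\frac{b^2}{2}t^{2H}g(u)$ with $g(u)=1+u^{2H}-(1+u)^{2H}$, $u=s/t$, and studies $g$; your appeal to strict super/sub-additivity of $x\mapsto x^{2H}$ is the same fact in different clothing, and your caveat that strictness needs $b\neq 0$ and $s,t>0$ is a fair one the paper leaves implicit). Where you genuinely diverge is the first bullet. The paper stays entirely inside its own results: by its increment formula (Proposition on second moments, with $a=0$, $b=1$) the quantity $-2^{2H-1}(t^{2H}+s^{2H})+(t+s)^{2H}+(t-s)^{2H}$ is a variance, hence $\ge 0$; rewriting this as $2(t'+s')^{2H}+2(t'-s')^{2H}\ge 2^{2H}(t'^{2H}+s'^{2H})$ and substituting $t'=t+s$, $s'=t-s$ gives exactly $2(t^{2H}+s^{2H})\ge (t+s)^{2H}+(t-s)^{2H}$, i.e.\ nonnegativity of the sfBm covariance, with no outside input. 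You instead either cite \cite{TB} or use the spectral representation $x^{2H}=c_H\int(1-\cos x\lambda)|\lambda|^{-1-2H}d\lambda$ plus the product-to-sum identity to factor the sfBm covariance as $c_H\int(1-\cos s\lambda)(1-\cos t\lambda)|\lambda|^{-1-2H}d\lambda\ge 0$; that computation is correct and arguably more illuminating (it exhibits the covariance as manifestly nonnegative for all $0<H<1$), at the cost of importing the representation of $|x|^{2H}$. One small caution on your fallback: merely observing that $D_H$ \emph{is} the covariance recorded in Lemma 2 does not give $D_H\ge 0$ (covariances can be negative), so if you skip the spectral argument you must cite the positivity statement of \cite{TB} specifically, or use the paper's variance-plus-substitution trick.
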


\bigskip
\begin{proof}

Let us show the first assertion. We have by equality (\ref{eq11})
\begin{equation*}
\frac{1}{2} \Bigg( - 2^{2H} (t^{2H} + s^{2H}) + 2 \,(t+s)^{2H} + 2 \mid t -s
\mid ^{2H} \Bigg) = E \Big( S_t^H(0,1) - S_s^H(0,1) \Big) ^2 \geq 0.
\end{equation*}

Thus, we get for every $ 0 <s^{'} < t^{'}$ 
\begin{equation*}
2 \, (t^{'}+s^{'})^{2H} + 2 \, (t^{'} -s^{'}) ^{2H} \geq 2^{2H} (t^{'2H} + s^{'2H}) .
\end{equation*}
By applying this inequality with $t^{'}= t+s$ and $s^{'}= t-s$, we
obtain 
\begin{equation*}
2 \, (t^{2H} + s^{2H}) \geq (t+s)^{2H} + (t-s)^{2H}.
\end{equation*}

This implies by equality (\ref{eq:7}) that $\displaystyle C_H(s,t) \geq 0.$

 \bigskip
 For the next three assertions, we observe that, by using the expressions of $C_H$ and $%
R_H $,
\begin{equation*}
C_H(s,t) - R_H(s,t) = \frac{b^2}{2} \; \Big( t^{2H} + s^{2H} - (s+t)
^{2H} \Big).
\end{equation*}

When $H=\frac{1}{2}, C_{1/2} = R_{1/2}$.
When $H \neq \frac{1}{2}$, set $ u = \frac{s}{t}, \; 0 \leq u \leq 1$. We get 
\begin{equation*}
C_H(s,t) - R_H(s,t) =  \frac{b^2}{2} \; t^{2H} \; g(u), 
\end{equation*}
\noindent
where $ g(u) = 1 + u^{2H} - (1+u)^{2H}$.

\bigskip
The study of the function $g$ completes the proof of the lemma.

\end{proof}
 
Let us turn to the expressions of the covariances of
the mfBm and the smfBm increments on non-overlapping intervals. To this aim, 
denote for $0 \le u < v \le s < t,$ 
\begin{equation*}
R_{u,v,s,t}= Cov \Big( M_v^H(a,b)-M_u^H(a,b), M_t^H(a,b)-M_s^H(a,b) \Big)
\end{equation*}
and 
\begin{equation*}
C_{u,v,s,t}= Cov \Big( S_v^H(a,b)-S_u^H(a,b), S_t^H(a,b)-S_s^H(a,b) \Big).
\end{equation*}

We deduce easily from lemma \ref{L:1} and lemma \ref{L:3} the following result.

\bigskip
\begin{lemma} \label{L:13} We have 
 
\begin{equation}
 R_{u,v,s,t} = \frac{b^2}{2} \; \Big( (t-u)^{2H} + (s-v)^{2H} 
   - (t-v)^{2H}  - (s-u)^{2H} \Big),
   \label{eq:23}
   \end{equation}
   
 \begin{equation}
\begin{array}{rcl}
\displaystyle C_{u,v,s,t} &=& \displaystyle  \frac{b^2}{2} \; \Big( (t+u)^{2H} + (t-u)^{2H} + (s+v)^{2H} + (s-v)^{2H} \\
\noalign{\vskip 2mm}
&-& \displaystyle  (t+v)^{2H} - (t-v)^{2H} - (s+u)^{2H} - (s-u)^{2H} \Big) .
\end{array} 
\label{eq:24}
\end{equation}

\end{lemma}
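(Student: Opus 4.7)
The plan is to expand each covariance bilinearly into four terms and then substitute the explicit covariance formulas from Lemma \ref{L:1} and Lemma \ref{L:3}. Since covariance is bilinear,
\begin{equation*}
R_{u,v,s,t} = \mathrm{Cov}(M_v^H, M_t^H) - \mathrm{Cov}(M_v^H, M_s^H) - \mathrm{Cov}(M_u^H, M_t^H) + \mathrm{Cov}(M_u^H, M_s^H),
\end{equation*}
and similarly for $C_{u,v,s,t}$ with $S^H$ in place of $M^H$. So the proof reduces to a direct computation.

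First I would handle the Brownian contribution. By Lemma \ref{L:1} and Lemma \ref{L:3}, both the mfBm and the smfBm covariances contain the term $a^2 (s\wedge t)$. Since $u<v\le s<t$, each pair of indices $(r,r')$ occurring in the four-term expansion satisfies $r\wedge r'=\min(r,r')$ with the smaller index being $u$ or $v$. A quick inspection gives $a^2(v\wedge t - v\wedge s - u\wedge t + u\wedge s) = a^2(v-v-u+u)=0$, so the Brownian piece disappears from both $R_{u,v,s,t}$ and $C_{u,v,s,t}$. This uses exactly the non-overlap hypothesis $v\le s$.

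Next I would treat the fractional pieces. For $R_{u,v,s,t}$, I substitute
$\mathrm{Cov}(B_r^H,B_{r'}^H)=\frac{1}{2}(r^{2H}+r'^{2H}-(r'-r)^{2H})$ from \eqref{eq:1} into the four-term expansion. The pure $r^{2H}$ and $r'^{2H}$ contributions cancel under the $+,-,-,+$ pattern of signs (each of $u^{2H},v^{2H},s^{2H},t^{2H}$ occurs with opposite signs in two of the four terms). What remains is precisely $\frac{b^{2}}{2}\bigl((t-u)^{2H}+(s-v)^{2H}-(t-v)^{2H}-(s-u)^{2H}\bigr)$, which is \eqref{eq:23}. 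For $C_{u,v,s,t}$, I proceed identically with the sfBm covariance from Lemma \ref{L:2},
\begin{equation*}
\mathrm{Cov}(\xi_r^H,\xi_{r'}^H)=r^{2H}+r'^{2H}-\tfrac{1}{2}\bigl((r+r')^{2H}+|r-r'|^{2H}\bigr).
\end{equation*}
Again the linear $r^{2H}$ terms telescope to zero, and the remaining eight terms of the form $\pm\frac{1}{2}(r+r')^{2H}$ and $\pm\frac{1}{2}|r-r'|^{2H}$ assemble, after multiplication by $b^{2}$ and rearrangement, into exactly \eqref{eq:24}.

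The whole argument is a bookkeeping exercise; the only substantive point is the cancellation of the linear $r^{2H}$ and $r\wedge r'$ pieces, which relies on $u<v\le s<t$ so that $v\wedge t=v$, $v\wedge s=v$, $u\wedge t=u$, $u\wedge s=u$. I expect the main risk to be a sign error in the eight-term expression for $C_{u,v,s,t}$, so I would write out the four substitutions in full and then collect terms grouped by $\pm(r+r')^{2H}$ and $\pm|r-r'|^{2H}$ before stating the final identity.
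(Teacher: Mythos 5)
Your proposal is correct and matches the paper's (unwritten) argument: the paper simply states that Lemma \ref{L:13} is "deduced easily" from Lemma \ref{L:1} and Lemma \ref{L:3}, and the intended deduction is exactly your bilinear four-term expansion, with the Brownian term vanishing because $u<v\le s<t$ and the pure power terms cancelling, leaving \eqref{eq:23} and \eqref{eq:24}. Nothing further is needed.
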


\bigskip
Let us show that the covariances of the mfBm and
the smfBm increments on non-overlapping intervals have the same sign but,
those of the smfBm are smaller in absolute value than those of the mfBm.

\bigskip
\begin{corollary} \label{C:14}
We have for $0 \le u < v \le s < t,$ that $R_{u,v,s,t}$ and $C_{u,v,s,t}$ are strictly positive (respectively 
strictly negative) for $ H > 1/2 $ (respectively $ H < 1/2$). Moreover, 
$ C_{u,v,s,t} < R_{u,v,s,t} $ (respectively $>$).
 
\end{corollary}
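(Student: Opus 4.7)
The plan is to treat both $R_{u,v,s,t}$ and the difference $C_{u,v,s,t} - R_{u,v,s,t}$ as discrete ``mixed second differences'' of the function $x \mapsto x^{2H}$, and to read off their signs from the strict monotonicity of $x \mapsto x^{2H-1}$. In this way the three assertions of the corollary reduce to two parallel one-variable calculations.

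For the sign of $R_{u,v,s,t}$, I would use formula (\ref{eq:23}) from Lemma~\ref{L:13} to rewrite
$$R_{u,v,s,t} = \frac{b^{2}}{2}\bigl(\psi(u) - \psi(v)\bigr), \qquad \psi(y) := (t-y)^{2H} - (s-y)^{2H},$$
where $\psi$ is smooth on $[u,v]$ because $0 \le u < v \le s < t$. A direct computation gives
$$\psi'(y) = -2H\bigl((t-y)^{2H-1} - (s-y)^{2H-1}\bigr),$$
and since $0 < s-y < t-y$ throughout $[u,v]$, the map $x \mapsto x^{2H-1}$ is strictly increasing when $H > 1/2$ and strictly decreasing when $H < 1/2$. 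Thus $\psi$ is strictly decreasing in the first case and strictly increasing in the second; combined with $u < v$, this yields $R_{u,v,s,t} > 0$ for $H > 1/2$ and $R_{u,v,s,t} < 0$ for $H < 1/2$.

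For the comparison $C_{u,v,s,t}$ versus $R_{u,v,s,t}$, I would subtract (\ref{eq:23}) from (\ref{eq:24}), noting that the four terms $(t-u)^{2H}$, $(s-v)^{2H}$, $(t-v)^{2H}$, $(s-u)^{2H}$ cancel and leave
$$C_{u,v,s,t} - R_{u,v,s,t} = \frac{b^{2}}{2}\bigl((t+u)^{2H} + (s+v)^{2H} - (t+v)^{2H} - (s+u)^{2H}\bigr).$$
Setting $\varphi(x) := (x+u)^{2H} - (x+v)^{2H}$, the bracket is exactly $\varphi(t) - \varphi(s)$, and
$$\varphi'(x) = 2H\bigl((x+u)^{2H-1} - (x+v)^{2H-1}\bigr).$$
Since $u < v$, the same dichotomy as above shows $\varphi' < 0$ for $H > 1/2$ and $\varphi' > 0$ for $H < 1/2$. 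Combined with $s < t$, this gives $C_{u,v,s,t} - R_{u,v,s,t} < 0$ for $H > 1/2$ and $C_{u,v,s,t} - R_{u,v,s,t} > 0$ for $H < 1/2$, which (together with the first part) yields both the common sign and the strict inequality $|C_{u,v,s,t}| < |R_{u,v,s,t}|$.

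There is no genuine obstacle here; the argument is essentially a twofold application of the mean value theorem on a rectangle. The only point requiring mild attention is bookkeeping the strict inequalities: one must use that the intervals $[u,v]$ and $[s,t]$ are nondegenerate ($u < v$ and $s < t$) so that the strict monotonicity of $\psi$ and $\varphi$ translates into strict inequalities in the conclusion.
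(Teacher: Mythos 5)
There is a genuine gap: the corollary asserts three things --- the sign of $R_{u,v,s,t}$, the sign of $C_{u,v,s,t}$, and the comparison $C_{u,v,s,t}<R_{u,v,s,t}$ (resp.\ $>$) --- and your argument only delivers the first and the third. Your treatment of $R_{u,v,s,t}$ via $\psi(y)=(t-y)^{2H}-(s-y)^{2H}$ and of the difference via $\varphi(x)=(x+u)^{2H}-(x+v)^{2H}$ is exactly the paper's argument (its functions $g_1$ and $g_4$), and those two steps are fine. But the sign of $C_{u,v,s,t}$ does \emph{not} follow from them: for $H>1/2$ you have established $R_{u,v,s,t}>0$ and $C_{u,v,s,t}-R_{u,v,s,t}<0$, i.e.\ $0<R$ and $C<R$, which is perfectly consistent with $C_{u,v,s,t}\le 0$. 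The same failure occurs for $H<1/2$. Consequently your closing claim that you obtain ``the common sign and the strict inequality $|C_{u,v,s,t}|<|R_{u,v,s,t}|$'' is unsupported; both of these presuppose the sign of $C_{u,v,s,t}$, which is precisely the missing piece.

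The paper fills this gap with a separate, genuinely two-layer computation: it writes
$C_{u,v,s,t}=\frac{b^{2}}{2}\bigl(g_2(t)-g_2(s)\bigr)$ with
$g_2(x)=(x+u)^{2H}+(x-u)^{2H}-(x+v)^{2H}-(x-v)^{2H}$, and determines the monotonicity of $g_2$ by introducing $g_3(y)=(x+y)^{2H-1}+(x-y)^{2H-1}$, whose derivative $(2H-1)\bigl((x+y)^{2H-2}-(x-y)^{2H-2}\bigr)$ is controlled by the monotonicity of $z\mapsto z^{2H-2}$ (equivalently, the convexity/concavity of $z\mapsto z^{2H-1}$), not merely of $z\mapsto z^{2H-1}$ as in your two steps. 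You need to add this (or an equivalent argument, e.g.\ identifying $C_{u,v,s,t}$ as $b^{2}$ times the covariance of sub-fractional increments and invoking the known sign of that covariance) before the corollary is fully proved.
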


\bigskip
\begin{proof} First, we have $ 0 \le u < v \le s < t$ 
$$
R_{u,v,s,t} = \frac{b^2}{2} \; \Bigl( g_1(v) - g_1(u) \Bigr),
$$
\noindent
where $ g_1(x) = (s-x)^{2H} - (t-x)^{2H}, \; u \le x \le v$.

We have
$$
g_1^{'}(x) = 2H \; \Bigl( - (s-x)^{2H-1} + (t-x)^{2H-1} \Bigr).
$$

When $ H < 1/2, g_1^{'} \le 0$. Then $ g_1$ decreases and therefore $ R_{u,v,s,t} \le 0$.
When $ H > 1/2, g_1^{'} \ge 0$. Then $ g_1$ increases and therefore $ R_{u,v,s,t} \ge 0$.

\bigskip
Next we have for $ 0 \le \ u < v \le s < t$ 
$$
C_{u,v,s,t} = \frac{b^2}{2} \Bigl( g_2(t) - g_2(s) \Bigr)
$$
\noindent
where $
g_2(x) = -(x+v)^{2H} - (x-v)^{2H} + (x+u)^{2H} + (x-u)^{2H}, \; s \le x \le t$.

We have
$$
g_2^{'}(x) = 2H \; \Bigl( g_3(u) - g_3(v) \Bigr),
$$
\noindent
where $ g_3(y) = (x+y)^{2H-1} + (x-y)^{2H-1}, \; u \le y \le v$.

We have
$$
g_3{'}(y) = (2H-1) \; 
\Bigl( (x+y)^{2H-2} - (x-y)^{2H-2} \Bigr).
$$

When $ H < 1/2, g_3^{'} > 0$. Since $ g_3$ increases, $g_2^{'} < 0$ and therefore $ g_2$ decreases. Thus 
$ C_{u,v,s,t} \le 0$. When $ H > 1/2, g_3^{'} < 0$. Since $g_3$ decreases, $ g_2^{'} > 0$ and therefore 
$ g_2$ increases. Thus $ C_{u,v,s,t} \ge 0$.

\bigskip
Finally let us denote by 
$ D_(u,v,s,t) $ the quantity defined as follows
\begin{equation}  \label{eq500}
\begin{array}{crl}
D_{u,v,s,t} & = & C_{u,v,s,t} - R_{u,v,s,t} \\
& = & \frac{b^2}{2} \; \Bigl( (t+u)^{2H}
- (t+v)^{2H} + (s+v)^{2H} - (s+u)^{2H} \Bigr) \\
& = & \frac{b^2}{2} \; \Bigl( g_4(t) -g_4(s) \Bigr),
\end{array}
\end{equation}
\noindent
where $g_4(x) = (x+u)^{2H} - (x+v)^{2H}, \; s \le x \le t.$

Let us remark that, when $ H > 1/2, \; g_4 $ decreases, and when  $ H < 1/2, \; 
g_4 $ increases. This ends the proof of the lemma.

\end{proof}

\bigskip
\begin{corollary} \label{C:15} We have

\begin{itemize}
\item $\displaystyle \lim_{s,t \rightarrow + \infty } R_{u,v,s,t} = 0 $ if and only if $ 0 < H \le \frac{1}{2}$.
\item For every $\displaystyle  0 < H < 1$, $\displaystyle \lim_{s,t \rightarrow + \infty } C_{u,v,s,t} = 0 $.

\end{itemize}
\end{corollary}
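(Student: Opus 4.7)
The plan is to exploit the explicit formulas for $R_{u,v,s,t}$ and $C_{u,v,s,t}$ furnished by Lemma \ref{L:13}: in both cases the expression can be regrouped as $\frac{b^{2}}{2}\bigl(\varphi(t)-\varphi(s)\bigr)$ for a suitable function $\varphi$, after which the asymptotics follow from a Taylor expansion of $(x\pm c)^{2H}$ at infinity. The payoff of this reformulation is that $\varphi(t)$ and $\varphi(s)$ decouple, so joint convergence as $s,t\to+\infty$ reduces to the behaviour of a single scalar function.

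For the first assertion, I would set
\begin{equation*}
f(x)=(x-u)^{2H}-(x-v)^{2H},
\end{equation*}
so that $R_{u,v,s,t}=\tfrac{b^{2}}{2}\bigl(f(t)-f(s)\bigr)$. Expanding gives
\begin{equation*}
f(x)=2H(v-u)\,x^{2H-1}+H(2H-1)(u^{2}-v^{2})\,x^{2H-2}+o(x^{2H-2})\quad\text{as }x\to+\infty.
\end{equation*}
When $H<\tfrac{1}{2}$ the leading power $x^{2H-1}$ tends to $0$, so $f(x)\to 0$ and hence $R_{u,v,s,t}\to 0$ along any joint path; when $H=\tfrac{1}{2}$ one has $f(x)\equiv v-u$, so $R_{u,v,s,t}\equiv 0$. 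When $H>\tfrac{1}{2}$, fixing $s$ and letting $t\to+\infty$ gives $f(t)\to+\infty$ (since $v>u$), showing that the limit along that path is not $0$; this supplies the converse implication.

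For the second assertion, I would put
\begin{equation*}
g(x)=(x+u)^{2H}+(x-u)^{2H}-(x+v)^{2H}-(x-v)^{2H},
\end{equation*}
so that $C_{u,v,s,t}=\tfrac{b^{2}}{2}\bigl(g(t)-g(s)\bigr)$. The key point is the symmetric grouping: the identity
\begin{equation*}
(x+c)^{2H}+(x-c)^{2H}=2x^{2H}+2H(2H-1)c^{2}x^{2H-2}+O(x^{2H-4})
\end{equation*}
makes the $x^{2H}$ and, crucially, the $x^{2H-1}$ contributions cancel, yielding
\begin{equation*}
g(x)=2H(2H-1)(u^{2}-v^{2})\,x^{2H-2}+O(x^{2H-4}).
\end{equation*}
Since $2H-2<0$ for every $H\in(0,1)$, we get $g(x)\to 0$, whence $C_{u,v,s,t}\to 0$ without any restriction on $H$.

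The main obstacle is combinatorial rather than conceptual: one must notice that in the mfBm expression only the \emph{asymmetric} (odd-in-$c$) contributions survive, so the leading decay of $R_{u,v,s,t}$ is of order $x^{2H-1}$ and becomes critical precisely at $H=\tfrac{1}{2}$; by contrast, the symmetric plus/minus grouping appearing in the smfBm expression kills this odd term, pushing the decay to order $x^{2H-2}$ and hence trivialising it for every $H\in(0,1)$. Once this cancellation is localised, both statements follow immediately from the one-sided Taylor expansion.
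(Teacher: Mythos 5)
Your proof is correct and follows essentially the same route as the paper: both start from the explicit covariance formulas of Lemma \ref{L:13} and apply Taylor expansions at infinity, the key point in both being that the symmetric grouping in $C_{u,v,s,t}$ cancels the $x^{2H-1}$ term so that the decay is of order $x^{2H-2}$ for every $H\in(0,1)$, while for $R_{u,v,s,t}$ the surviving $x^{2H-1}$ term is critical at $H=\frac{1}{2}$. One cosmetic remark: since the limit is a joint one in $(s,t)$, the disproof for $H>\frac{1}{2}$ should take $s$ arbitrarily large (or a path such as $t=2s$) rather than literally fixed, which your observation that $f(t)\to+\infty$ for \emph{every} fixed $s$ immediately supplies.
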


\begin{proof} Combining (\ref{eq:23}) with Taylor expansions, we have as
$ s, t \rightarrow + \infty$,
$$
R_{u,v,s,t} = b^2 \; H \; (v - u) \; 
\Bigl(
\frac{1}{t^{1-2H}} - \frac{1}{s^{1-2H}} \Bigr)
+ o \Bigl(\frac{1}{t^{1-2H}} \Bigr) + o \Bigl( \frac{1}{s^{1-2H}} \Bigr),
$$
\noindent
which proves the first assertion of the corollary.

\bigskip
Let us turn to $ C_{u,v,s,t} $. Combining (\ref{eq:24})
with Taylor expansions, we have as
$ s, t \rightarrow~+ \infty$,
$$
C_{u,v,s,t} = b^2 \; H \; (2H-1) \; (v^2 - u^2) \; 
\Bigl(
\frac{1}{s^{2-2H}} - \frac{1}{t^{2-2H}} \Bigr)
+ o \Bigl(\frac{1}{s^{2-2H}} \Bigr) + o \Bigl( \frac{1}{t^{2-2H}} \Bigr),
$$
\noindent
which completes the proof of corollary \ref{C:15}.   

\end{proof}

\bigskip
In the next lemma, we will show that the increments of the smfBm on
intervals $[u,u+r]$ and $[u+r,u+2r]$ are more weakly correlated than those of the
mfBm.

\bigskip
\begin{lemma} \label{L:16} Assume $ H \neq 1/2$. We have for $u \ge 0$ and $ r > 0$, 
\begin{equation}
\Big| \rho \Bigl( S_{u+r}^H - S_u^H , 
S_{u+2r}^H - S_{u+r}^H \Bigr) \Big| \le \Big| \rho \Big( M_{u+r}^H - M_u^H ,
M_{u+2r}^H - M_{u+r}^H  \Big) \Big| .
\label{eq:25}
\end{equation}
\end{lemma}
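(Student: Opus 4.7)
The strategy is to reduce the inequality to a polynomial identity in $\theta := a^2 r/b^2$ and then verify it via the decomposition of $\gamma$ introduced in the proof of Corollary~\ref{C:9}. Applying Lemma~\ref{L:8} with $s=u$, $t=u+r$, $h=r$ expresses the smfBm correlation as $\gamma/\sqrt{(2\theta+\alpha_1)(2\theta+\alpha_2)}$, where $\gamma := \gamma(u,u+r,r)$ and $\alpha_i := \alpha(u+(i-1)r, r)$. Since the mfBm has stationary increments (Lemma~\ref{L:1}), its correlation equals $\gamma_M/(2\theta+\alpha_M)$, with $\gamma_M := r^{2H}(2^{2H}-2)$ and $\alpha_M := 2r^{2H}$. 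The inequality $|\rho(S)|^2 \le |\rho(M)|^2$ then becomes $\gamma_M^2(2\theta+\alpha_1)(2\theta+\alpha_2) \ge \gamma^2(2\theta+\alpha_M)^2$ for $\theta \ge 0$. Expanded as a quadratic in $\theta$, it suffices to show that its leading and constant coefficients are non-negative (the middle one then follows by applying AM-GM to $\alpha_1, \alpha_2$), namely \textbf{(A)} $\gamma^2 \le \gamma_M^2$ and \textbf{(B)} $\gamma^2\alpha_M^2 \le \gamma_M^2\alpha_1\alpha_2$.

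Condition \textbf{(A)} will follow from the key identity in the proof of Corollary~\ref{C:9}:
\[
\gamma(u,u+r,r) = f(r) - f(2u+2r), \qquad f(x) := (x+r)^{2H}-2x^{2H}+(x-r)^{2H}.
\]
On $[r,\infty)$, $f$ is the second difference of $x\mapsto x^{2H}$, hence positive for $H>1/2$ and negative for $H<1/2$; moreover, by the sign analysis in that same proof (which identifies $f'$ as the second difference of $x\mapsto x^{2H-1}$), $f$ is decreasing (resp.\ increasing) there. In either case, $|f|$ attains its maximum $|\gamma_M|=|f(r)|$ at $x=r$, so $|\gamma| = |f(r) - f(2u+2r)| \le |\gamma_M|$.

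For \textbf{(B)}, a direct expansion using $(2x)^{2H} = 2^{2H}x^{2H}$ yields the useful identity $\alpha(t,r) = 2r^{2H} - f(2t+r)$, expressing every quantity through $f$. When $H<1/2$, $f \le 0$ forces $\alpha_i \ge \alpha_M$, so (B) follows from (A) by monotonicity. When $H>1/2$, $f$ is positive and decreasing, so $\alpha_1 \le \alpha_2$; using $\sqrt{\alpha_1\alpha_2}\ge\alpha_1$, condition (B) reduces to the scalar inequality
\[
f(x+r)/f(x) \ge 2^{2H-1}-1 \qquad (x \ge r).
\]
I would verify this at $x=r$ via the elementary inequality $3^{2H} \ge 2^{4H-1}+1$ on $[1/2,1]$ (which holds with equality at both endpoints, since $H \mapsto 3^{2H}-2^{4H-1}-1$ is concave there), and extend it to $x>r$ by establishing that $f$ is log-convex on $[r,\infty)$, which makes $x \mapsto f(x+r)/f(x)$ non-decreasing in $x$.

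The principal obstacle is the log-convexity of $f$ in the case $H>1/2$; the other steps amount to routine book-keeping using already-established tools. My planned approach uses the integral representation $f(x) = 2H(2H-1)\int_{x-r}^{x+r}(r-|x-y|)y^{2H-2}\,dy$ together with the Cauchy--Schwarz inequality on a suitable measure to obtain $f(x)^2 \le f(x-r)f(x+r)$.
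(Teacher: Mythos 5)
Your argument is correct, but it follows a genuinely different route from the paper, most notably in the hard case $H>\tfrac12$. The paper splits into two cases: for $H<\tfrac12$ it combines Corollary \ref{C:14} (signs of $R$, $C$ and $D$) with the variance lower bound of Proposition \ref{p:6}, and for $H>\tfrac12$ it imports the key inequality $A(x)\le (2^{2H-1}-1)\sqrt{B(x)C(x)}$ from \cite{TB} (i.e.\ the analogous comparison for the sfBm itself) and only has to check $0<B(x)\le 2$, $0<C(x)\le 2$ by convexity. You instead make the whole statement a quadratic inequality in $\theta=a^2r/b^2$ with nonnegative coefficients, controlled by the two scalar facts (A) $\gamma^2\le\gamma_M^2$ and (B) $\gamma^2\alpha_M^2\le\gamma_M^2\alpha_1\alpha_2$, everything being expressed through the second difference $f(x)=(x+r)^{2H}-2x^{2H}+(x-r)^{2H}$ via the identities $\gamma=f(r)-f(2u+2r)$, $\gamma_M=f(r)$, $\alpha(t,r)=2r^{2H}-f(2t+r)$ (all of which I checked); the monotonicity and sign of $f$ give (A) and the case $H<\tfrac12$ of (B), while the case $H>\tfrac12$ rests on $f(x+r)/f(x)\ge 2^{2H-1}-1$, proved at $x=r$ by $3^{2H}\ge 2^{4H-1}+1$ (your concavity justification of this is valid) and propagated by log-convexity of $f$. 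This is self-contained — it in effect reproves the sfBm comparison of \cite{TB} rather than citing it — at the price of the log-convexity lemma, whereas the paper's proof is shorter but leans on an external result. One point to tighten: the single-scale inequality $f(x)^2\le f(x-r)f(x+r)$ that you announce as the output of the Cauchy--Schwarz step only yields monotonicity of $x\mapsto f(x+r)/f(x)$ along steps of size $r$, so with the base case checked only at $x=r$ it does not cover all real $x\ge r$; you should run the same argument for an arbitrary midpoint, i.e.\ prove $f\bigl(\tfrac{x_1+x_2}{2}\bigr)^2\le f(x_1)f(x_2)$ for all $x_1,x_2\ge r$ (the representation $f(x)=2H(2H-1)\int_0^r(r-s)\bigl((x+s)^{2H-2}+(x-s)^{2H-2}\bigr)\,ds$, AM--GM inside, and Cauchy--Schwarz for the integral give exactly this), which together with continuity yields genuine log-convexity and closes the argument.
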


\begin{proof}
Combining the definition of $R_{u,v,s,t}$ with (\ref{eq:23}), we get
\begin{equation}
\begin{array}{crl}
\rho \Big( M_{u+r}^H - M_u^H, M_{u+2r}^H - M_{u+r}^H \Big) & = & \frac{
R_{u,u+r,u+r,u+2r} }{\sqrt{ V(M_{u+r}^H - M_u^H ) \; V(M_{u+2r}^H - M_{u+r}^H )}} \\
 &  & \\
& = & \frac{b^2 (2^{2H-1}-1)r^{2H}}{\sqrt{ V(M_{u+r}^H - M_u^H ) \; V(M_{u+2r}^H - M_{u+r}^H \Bigr)}}. 
\label{eq201}
\end{array}
 \end{equation}

\bigskip
Moreover, we get by lemma \ref{L:1}
\begin{equation}
V(M_{u+r}^H - M_u^H ) = V(M_{u+2r}^H - M_{u+r}^H ) = V(M_r^H) = 
a^2 \; r + b^2 \; r^{2H}. 
 \label{eq202}
\end{equation}

\bigskip
\noindent
Then, combining (\ref{eq201}) with (\ref{eq202}), we have 
\begin{equation}
\label{eq203}
\rho \Big( M_{u+r}^H - M_u^H, M_{u+2r}^H - M_{u+r}^H \Big) =
\frac{b^2 (2^{2H-1}-1)r^{2H}}{
a^2 \; r + b^2 \; r^{2H}}.  
 \end{equation}

\bigskip
Let us turn to $ \rho \Bigl( S_{u+r}^H - S_u^H , 
S_{u+2r}^H - S_{u+r}^H \Bigr)$. We have
\begin{equation}
\rho \Big( S_{u+2r}^H - S_{u+r}^H, S_{u+r}^H - S_u^H \Big) = \frac{
C_{u,u+r,u+r,u+2r} }{\sqrt{V(S_{u+2r}^H - S_{u+r}^H)V(S_{u+r}^H - S_{u}^H)}}. 
 \label{eq204}
\end{equation}

\bigskip
Let us consider the two following cases.

\bigskip
\noindent
Case 1. $H < \frac{1}{2}$

\bigskip
By using (\ref{eq203}) and (\ref{eq204}), we can rewrite inequality (\ref{eq:25}) as follows: 
\begin{equation}
\Big| \frac{C_{u,u+r,u+r,u+2r}}{R_{u,u+r,u+r,u+2r} } \Big| 
\le
\frac{ \sqrt{V(S_{u+2r}^H - S_{u+r}^H)V(S_{u+r}^H - S_{u}^H)}}{a^2 \; r + b^2 \; r^{2H}}.   
 \label{eq205}
\end{equation}

\bigskip
Note that by corollary \ref{C:14} and equality (\ref{eq500})
$$
\Big| \frac{C_{u,u+r,u+r,u+2r}}{R_{u,u+r,u+r,u+2r} } \Big| 
=
\frac{C_{u,u+r,u+r,u+2r}}{R_{u,u+r,u+r,u+2r} } 
=
1 + \frac{D_{u,u+r,u+r,u+2r}}{R_{u,u+r,u+r,u+2r}}.
$$

\bigskip
\noindent
Then, (\ref{eq205}) can be rewritten as follows
\begin{equation} 
0 \le 
1 + \frac{D_{u,u+r,u+r,u+2r}}{R_{u,u+r,u+r,u+2r}}
\le
\frac{ \sqrt{V(S_{u+2r}^H - S_{u+r}^H)V(S_{u+r}^H - S_{u}^H)}}{a^2 \; r + b^2 \; r^{2H}}.   
 \label{eq206}
\end{equation}

\bigskip
The second part of proposition \ref{p:6} implies that
$$
a^2 r + b^2 r^{2H} \le 
\sqrt{V(S_{u+2r}^H - S_{u+r}^H) \; V(S_{u+r}^H - S_{u}^H)}.
$$
 
\noindent
Then, to prove (\ref{eq206}), it suffices to show that
$$ 
1 + \frac{D_{u,u+r,u+r,u+2r}}{R_{u,u+r,u+r,u+2r}}
\le
1.
$$

\bigskip
By corollary \ref{C:14}, $ R_{u,u+r,u+r,u+2r} < 0$ and $ D_{u,u+r,u+r,u+2r} >0$. The proof of case 1 is complete.

\bigskip
\noindent
Case 2. $H > \frac{1}{2}$

\bigskip
Combining (\ref{eq203}) with (\ref{eq204}), we get 
$$
\frac{ \rho \Big( S_{u+r}^H - S_u^H, S_{u+2r}^H -
S_{u+r}^H \Big) }{ \rho \Big( M_{u+r}^H - M_u^H, M_{u+2r}^H - M_{u+r}^H \Big) }
=
\frac{
C_{u,u+r,u+r,u+2r} }{\sqrt{V(S_{u+2r}^H - S_{u+r}^H)V(S_{u+r}^H - S_{u}^H)}}
\; \; 
\frac{a^2 \; r + b^2 \; r^{2H}} {b^2 (2^{2H-1}-1)r^{2H}}.
$$

Recall that we have precise expressions of $V(S_{u+r}^H - S_{u}^H) $ and of $V(S_{u+2r}^H - S_{u+r}^H) $
 by the first part of proposition \ref{p:6}
and of $C_{u,v,s,t} $ by equality (\ref{eq:24}).

\bigskip
Set $
 x = 
\frac{2u}{r} $ and denote by $A, B$ and $C$ the functions defined as follows :
\begin{equation*}
A(x) = 2(x+2)^{2H} + (2^{2H} -2) -(x+3)^{2H} -(x+1)^{2H} ,
\end{equation*}
\begin{equation*}
B(x)= 2 - x^{2H} -(x+2)^{2H} + 2 (x+1)^{2H}
\end{equation*}
and \hspace{3mm} $C(x) = 2 - (x+2)^{2H} -(x+4)^{2H} + 2
(x+3)^{2H}.$

\bigskip
Easy computations yield 
$$
C_{u,u+r,u+r,u+2r} = \frac{b^2}{2} \; r^{2H} \; A(x)
$$
$$
 V(S_{u+r}^H - S_{u}^H) = \frac{1}{2} \; \Bigl( 2 a^2 r + b^2 r^{2H} B(x) \Bigr)
 $$
$$
 V(S_{u+2r}^H - S_{u+r}^H) = \frac{1}{2} \; \Bigl( 2 a^2 r + b^2 r^{2H} C(x) \Bigr)
 $$

\bigskip
Then, we have
$$
\frac{ \rho \Big( S_{u+r}^H - S_u^H, S_{u+2r}^H -
S_{u+r}^H \Big) }{ \rho \Big( M_{u+r}^H - M_u^H, M_{u+2r}^H - M_{u+r}^H \Big) }
=
\frac{A(x) \; (a^2 \; r + b^2 \; r^{2H})}{  (2^{2H-1}-1) \; \sqrt{(2 a^2 r 
+ b^2 r^{2H} B(x))(2 a^2 r 
+ b^2 r^{2H} C(x)) }}
$$
$$
=
\frac{A(x)}{  (2^{2H-1}-1) \; \sqrt{ B(x) \, C(x) }}
\;
\frac{a^2 \; r + b^2 \; r^{2H}}{\sqrt{(2 a^2 r / B(x) 
+ b^2 r^{2H})(2 a^2 r / C(x)
+ b^2 r^{2H}) }}.
$$

Since it has been proved in  \cite[see][p. 412]{TB}, that
$$
\frac{A(x)}{  (2^{2H-1}-1) \; \sqrt{ B(x) \, C(x) }}
\le 1,
$$
\noindent
we get
$$
\frac{ \rho \Big( S_{u+r}^H - S_u^H, S_{u+2r}^H -
S_{u+r}^H \Big) }{ \rho \Big( M_{u+r}^H - M_u^H, M_{u+2r}^H - M_{u+r}^H \Big) }
\le
\frac{a^2 \; r + b^2 \; r^{2H}}{\sqrt{(2 a^2 r / B(x) 
+ b^2 r^{2H})(2 a^2 r / C(x)
+ b^2 r^{2H}) }}.
$$

Therefore it suffices to show 
$$
a^2 \; r + b^2 \; r^{2H} \le 2 a^2 r / B(x)
+ b^2 r^{2H} 
\; \; \; \mbox{and} \; \; \;
a^2 \; r + b^2 \; r^{2H} \le 2 a^2 r / C(x)
+ b^2 r^{2H},
$$
\noindent
that is 
$$
0 < B(x) \le 2
\; \; \; \mbox{and} \; \; \;
0 < C(x) \le 2.
$$

Let us show the first double inequality. Since by lemma \ref{L:2}
$$
b^2 \; r^{2H} \; B(x) = 2 \; V\Bigl( b ( \xi^H(u+r) - \xi^H(u)) \Bigr),
$$
\noindent
$ B(x) > 0$. Moreover, since the function $ x \rightarrow x^{2H}$ is convex for $ H > 1/2, \; B(x) \leq 2$. Similarly, we can establish 
$ 0 < C(x) \le 2$.

\bigskip
The proof of the lemma is complete.

\end{proof}

\bigskip
\bigskip
In \cite{MZ}, it was proved that the increments of the mfBm $(M_t^H(a,b))$
are short-range dependent if, and only if $H < \frac{1}{2}$. 
To end this subsection, let us show that for every $\displaystyle H \in ]0,
1[ $, the increments of $\displaystyle (S^H_t(a,b))_{t \in {\R _+}}$ are
short-range dependent. For convenience, let us introduce the following notation 
\begin{equation*}
C(p,n) = C_{p,p+1,p+n,p+n+1},
\end{equation*}
\noindent 
where $p$ and $n$ are integers with $ n \ge 1$.

\bigskip
We get by (\ref{eq:24})
$$
C(p, n) = \frac{b^{2}}{2} \Bigg(  (n+1)^{2H} - 2 n^{2H} + (n-1)^{2H} 
- (2p+n+2)^{2H} + 2 (2p+n+1)^{2H} - (2p+n)^{2H}\Bigg).
$$

\bigskip
A third-order Taylor expansion enables us to state the following lemma.

\bigskip
\begin{lemma}  \label{L:18} 
For any $ 0 < H < 1$ and $ p \in \N$, we have when $ n \rightarrow + \infty$
$$
C(p,n) = \Big( 2(1-H)H(2H-1)(2p+1)b^2 \Big) \; n^{2H-3} + o(n^{2H-3}),
$$
\noindent
and consequently
$$
\sum_{n \ge 1} \mid C(p, n) \mid < + \infty .
$$

\end{lemma}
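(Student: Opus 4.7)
The plan is to apply a third-order Taylor expansion to each of the two discrete second differences appearing in the formula for $C(p,n)$ and then subtract.

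First I observe that $C(p,n)$ involves two second-differences of $f(x)=x^{2H}$: one centered near $n$, the other near $2p+n+1$. For any smooth $f$, the symmetric expansion around $x$ gives
\begin{equation*}
f(x+1) - 2 f(x) + f(x-1) = f''(x) + \tfrac{1}{12}\, f^{(4)}(x) + O\big(f^{(6)}(x)\big).
\end{equation*}
Applying this with $f(x) = x^{2H}$ at $x = n$ yields
\begin{equation*}
(n+1)^{2H} - 2 n^{2H} + (n-1)^{2H} = 2H(2H-1)\, n^{2H-2} + O\big(n^{2H-4}\big),
\end{equation*}
and applying it at $x = 2p+n+1$ yields
\begin{equation*}
(2p+n+2)^{2H} - 2(2p+n+1)^{2H} + (2p+n)^{2H} = 2H(2H-1)(2p+n+1)^{2H-2} + O\big(n^{2H-4}\big).
\end{equation*}

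Substituting these into the expression for $C(p,n)$ gives
\begin{equation*}
C(p,n) = b^{2} H(2H-1)\, \Big( n^{2H-2} - (2p+n+1)^{2H-2} \Big) + O\big(n^{2H-4}\big).
\end{equation*}
Now I would expand the bracket: writing $(2p+n+1)^{2H-2} = n^{2H-2}\big(1 + (2p+1)/n\big)^{2H-2}$ and using $(1+y)^{2H-2} = 1 + (2H-2)y + O(y^{2})$, I get
\begin{equation*}
n^{2H-2} - (2p+n+1)^{2H-2} = 2(1-H)(2p+1)\, n^{2H-3} + O\big(n^{2H-4}\big).
\end{equation*}
Combining these two expansions produces exactly
\begin{equation*}
C(p,n) = 2(1-H)H(2H-1)(2p+1)\, b^{2}\, n^{2H-3} + o\big(n^{2H-3}\big),
\end{equation*}
which is the claimed asymptotic.

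For the summability statement, since $0 < H < 1$ one has $2H-3 < -1$, so the reference series $\sum n^{2H-3}$ converges. The asymptotic just established gives $|C(p,n)| = O(n^{2H-3})$, and so $\sum_{n\ge 1} |C(p,n)| < +\infty$.

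The only delicate point is the cancellation in the penultimate display: the two second differences each contribute a leading $n^{2H-2}$ term with the same coefficient $2H(2H-1)$, so without expanding $(2p+n+1)^{2H-2}$ to one further order one would only see the cancellation, not the surviving $n^{2H-3}$ term. This is the step that forces a third-order expansion (rather than the naive second-order one) and that pins down both the sign and the explicit factor $(2p+1)$ in the leading coefficient. Once this bookkeeping is done carefully, everything else is a direct substitution.
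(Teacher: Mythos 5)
Your proof is correct and follows exactly the route the paper intends: the paper gives no detailed argument, only the remark that ``a third-order Taylor expansion enables us to state the lemma,'' and your computation (second differences of $x^{2H}$ giving $2H(2H-1)x^{2H-2}$ at the two centers, then expanding $(2p+n+1)^{2H-2}-n^{2H-2}$ one order further to capture the surviving $n^{2H-3}$ term with coefficient $2(1-H)(2p+1)$) is precisely that expansion carried out, with the correct handling of the cancellation and of the error terms $O(n^{2H-4})=o(n^{2H-3})$. The summability conclusion from $2H-3<-1$ is likewise exactly the paper's intended consequence.
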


\bigskip
\bigskip

\section{Semi-martingale properties}

In the sequel, we assume $ b \neq 0$. 
For any process $X$, set 
$$
\Delta_j^n X(t) = X(j t /n) - X ((j-1) t / n), \;
j \in \{1,..n\}.
$$
\noindent  Denote by $A_n$ the quantity defined as follows : 
\begin{equation*}
A_n = \E  \Biggl( \sum_{j=1}^n \; \Biggl( \Delta_j^n S(t) \Biggr)^2 \Biggr) =\sum_{j=1}^n \; %
\E \Biggl( \Delta_j^n S(t) \Biggr)^2.
\end{equation*}

\begin{lemma}\label{L:22}

\begin{itemize}
\item 
If $H < \frac{1}{2}$, then $ \displaystyle \lim_{n \rightarrow + \infty} \; A_n = + \infty$.
\item 
If $H = \frac{1}{2}$, then $ \; A_n = (a^2 + b^2)  \; t$.
\item 
If $H > \frac{1}{2}$, then $ \displaystyle \lim_{n \rightarrow + \infty} \; A_n = a^2 \;  t$.
\end{itemize}
\end{lemma}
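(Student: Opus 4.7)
The plan is to reduce everything to Proposition \ref{p:6}, whose two-sided bound (\ref{eq12}) translates directly into two-sided bounds on $A_n$ after summing over $j$.

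First I would write $\E(\Delta_j^n S(t))^2$ with $s = (j-1)t/n$, $u = jt/n$, so that the increment length is $t/n$ and (\ref{eq12}) yields, uniformly in $j$,
\begin{equation*}
a^2\,\frac{t}{n} + b^2 \gamma \Big(\frac{t}{n}\Big)^{2H}
\;\le\;
\E\Bigl(\Delta_j^n S(t)\Bigr)^2
\;\le\;
a^2\,\frac{t}{n} + b^2 \nu \Big(\frac{t}{n}\Big)^{2H}.
\end{equation*}
Summing the $n$ identical bounds,
\begin{equation*}
a^2 t + b^2 \gamma\, t^{2H}\, n^{\,1-2H}
\;\le\;
A_n
\;\le\;
a^2 t + b^2 \nu\, t^{2H}\, n^{\,1-2H}.
\end{equation*}

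From here the three cases fall out by inspecting the values of $\gamma,\nu$ and the sign of $1-2H$. For $H < 1/2$ one has $\gamma = 1 > 0$ and $1-2H > 0$, hence the lower bound tends to $+\infty$. For $H > 1/2$ one has $\gamma = 2-2^{2H-1}$ and $\nu = 1$; since $H \in (1/2,1)$ gives $2^{2H-1}\in(1,2)$, both $\gamma$ and $\nu$ are finite positive constants, while $n^{1-2H}\to 0$, so both bounds converge to $a^2 t$, forcing $A_n\to a^2 t$. For $H = 1/2$ the bounds collapse ($\gamma = \nu = 1$ and $n^{1-2H} = 1$), giving $A_n = (a^2+b^2)t$ exactly.

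There is essentially no real obstacle: the entire content of the lemma is already packaged inside inequality (\ref{eq12}). The only points that require care are checking that $\gamma > 0$ in the regime $H>1/2$ (which needs $H<1$, true by hypothesis) so that the lower bound does not degenerate, and, for the $H=1/2$ equality case, noting that the inequalities become equalities because $\gamma = \nu = 1$ so no extra direct computation is needed.
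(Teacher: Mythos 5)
Your proof is correct, but it takes a genuinely different route from the paper. The paper does not invoke Proposition \ref{p:6} at all: it computes $\E\bigl(\Delta_j^n S(t)\bigr)^2$ exactly from the fBm covariance (\ref{eq:1}), obtaining $a^2 t/n + b^2 (t/n)^{2H} + 2^{2H} b^2 (t/n)^{2H} f(j)$ with $f(x)=\bigl(\frac{2x-1}{2}\bigr)^{2H}-\frac{x^{2H}+(x-1)^{2H}}{2}$, and then controls $\sum_{j=1}^n f(j)$ through the sign, monotonicity and Taylor expansion of $f$, splitting into the three cases for $H$. You instead reuse the two-sided bound (\ref{eq12}), which depends only on the increment length $t-s=t/n$ and is therefore uniform in $j$; summing gives $a^2 t + b^2\gamma\, t^{2H} n^{1-2H} \le A_n \le a^2 t + b^2\nu\, t^{2H} n^{1-2H}$ and the three conclusions follow by squeezing (with exact collapse at $H=1/2$ since $\gamma=\nu=1$ there). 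This is legitimate — Proposition \ref{p:6} is proved earlier and independently — and it is shorter; what the paper's direct computation buys is the exact expression for $A_n$ and its second-order behavior, which your squeeze discards. Two small points worth making explicit: the divergence in the case $H<1/2$ uses the standing assumption $b\neq 0$ (and $t>0$) stated at the start of Section 3, and in the case $H>1/2$ the positivity of $\gamma$ is not actually needed — it suffices that $\gamma$ and $\nu$ are fixed constants so that both correction terms are $O(n^{1-2H})\to 0$.
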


\bigskip
\begin{proof} Since the processes $B$ and $B_H$ are independent, we have 
\begin{equation*}
\E \Biggl( \Delta_j^n S(t) \Biggr)^2 = \frac{a^2}{2} \; \E \Biggl( \Delta_j^n B(t) +
\Delta_j^n B(-t) \Biggr)^2 + \frac{b^2}{2} \; \E \Biggl( \Delta_j^n B_H(t) +
\Delta_j^n B_H(-t) \Biggr)^2.
\end{equation*}

Using equality (\ref{eq:1}), direct computations imply 
\begin{equation*}
\E \Biggl( \Delta_j^n S(t) \Biggr)^2 =  a^2 \; \frac{t}{n} + \; b^2 \; 
\frac{t^{2H}}{n^{2H}} + 2^{2H} \; b^2 \; \frac{t^{2H}}{n^{2H}} \; \Biggl( %
\Biggl( \frac{2j-1}{2} \Biggr)^{2H} - \frac{j^{2H} + (j-1)^{2H}}{2} \Biggr),
\end{equation*}
\noindent and hence 
\begin{equation*}
A_n = a^2 \; t + \; b^2 \; t^{2H} \; n^{1-2H} + 2^{2H} \; b^2 \; 
\frac{t^{2H}}{n^{2H}} \; \sum_{j=1}^n \; \Biggl( \Biggl( \frac{2j-1}{2} %
\Biggr)^{2H} - \frac{j^{2H} + (j-1)^{2H}}{2} \Biggr).
\end{equation*}

\bigskip Let us consider the function $f$ defined as follows : 
\begin{equation*}
f(x) = \Biggl( \frac{2x-1}{2} \Biggr)^{2H} - \frac{x^{2H} + (x-1)^{2H}}{2},
\; \; \; x \geq 0.
\end{equation*}

\bigskip We deduce from convexity properties that, when $H < 1/2, \; f(x) > 0$,
when $H = 1/2, \; f(x) =0$ and when $H > 1/2, \; f(x) < 0$. We have also 
\begin{equation*}
f^{^{\prime }}(x) = 2 \; H \; \Biggl( \Biggl( \frac{2x-1}{2} \Biggr)^{2H-1}
- \frac{x^{2H-1} + (x-1)^{2H-1}}{2} \Biggr), \; \; \; x \geq 0.
\end{equation*}

\bigskip To determine $\displaystyle \lim_{n \rightarrow + \infty} \; A_n$,
we have to consider the following three cases.

\bigskip \noindent \textit{Case 1.} $H < 1/2$

\bigskip Since $f^{^{\prime }} \leq 0$, for every $j \in \{1,..,n\}$,
\begin{equation*}
f(j) \geq f(n) = \Biggl( \frac{2n-1}{2} \Biggr)^{2H} - \frac{n^{2H} + (n-1)^{2H}}{2} >
0.
\end{equation*}
When $n$ is large enough, we get
$$
 a^2 \; t + \; b^2 \; t^{2H} \; n^{1-2H} + 2^{2H} \; b^2 \; t^{2H}
\; \Biggl( - \frac{H (2H-1)}{4n} + o\Bigl( \frac{1}{n} \Bigr) \Biggr) \leq A_n, 
$$

\noindent and therefore, since $ b \neq 0$,
\begin{equation*}
\lim_{n \rightarrow + \infty} \; A_n = + \infty.
\end{equation*}

\bigskip \noindent \textit{Case 2.} $H = 1/2$

\bigskip We obviously have 
\begin{equation*}
A_n = (a^2 \; + \; b^2 ) \; t.
\end{equation*}

\bigskip \noindent \textit{Case 3.} $H > 1/2$

\bigskip Since $f^{^{\prime }} \geq 0, \; f$ increases from $f(1) = \frac{1}{%
2^{2H}} - \frac{1}{2}$ to 
\begin{equation*}
f(n) = \Biggl( \frac{2n-1}{2} \Biggr)^{2H} - \frac{n^{2H} + (n-1)^{2H}}{2} <
0.
\end{equation*}
When $n$ is large enough, we get

\bigskip \noindent $\displaystyle 
a^2 \; t + \; b^2 \; t^{2H} \; n^{1-2H} + 2^{2H} \; b^2 \; t^{2H}
\; n^{1-2H} \; \Biggl( \frac{1}{2^{2H}} - \frac{1}{2} \Biggr) $ 
\begin{equation*}
\leq A_n \leq a^2 \; t +  \; b^2 \; t^{2H} \; n^{1-2H} + 2^{2H+} \;
b^2 \; t^{2H} \; \Biggl( - \frac{H (2H-1)}{4n} + o\Bigl( \frac{1}{n} \Bigr) %
\Biggr)
\end{equation*}
\noindent and therefore 
\begin{equation*}
\lim_{n \rightarrow + \infty} \; A_n = a^2 \; t.
\end{equation*}

\bigskip This completes the proof of the lemma. 

\end{proof}

\bigskip
Let us now recall the Bichteler-Dellacherie theorem \cite[see][section VIII.4]{DELL}.

\bigskip
\begin{theorem} \label{T:3.2} 
Assume that a filtration $\displaystyle \mathcal{F} =({\cal F}_{t})_{0 \le t  \le T}, 0 < T < +\infty, $ satisfies the usual assumptions, 
i.e. it is right-continuous, ${\cal F}_{T}$ is complete and ${\cal F}_{0}$ contains all null sets of ${\cal F}_{T}$.
An a.s. right-continuous,  $\displaystyle {\cal F}$-adapted stochastic process $\{X_t, 0 \le t  \le T \}$ is a  $\displaystyle {\cal F}$-semi-martingale 
if and only if 
\begin{equation*}
I_X \Bigl( \beta(\mathcal{F}) \Bigr)
\end{equation*}
\noindent is bounded in $L^0$, where 
\begin{equation*}
\beta(\mathcal{F}) = \Biggl\{ \sum_{j=0}^{n-1} \; f_j \; \mathbf{1}_{
]t_j, t_{j+1}]}, \; \; n \in \mathbb{N}, \; 0 \leq t_0 \leq .. \leq t_n \leq
T, \;
\end{equation*}
\begin{equation*}
\forall \, j, \, f_j \hspace{1mm} \mathrm{is} \hspace{1mm} \mathcal{F}_{t_j} 
\hspace{1mm} \mathrm{measurable} \hspace{1mm}\mathrm{and } \hspace{1mm} \mid
f_j \mid \leq 1 , \mathrm{with} \hspace{1mm} \mathrm{probability} \hspace{1mm%
} 1 \Biggr\},
\end{equation*}
\noindent and 
\begin{equation*}
I_X(\theta) = \sum_{j=0}^{n-1} \; f_j \;\Bigl( X_{t_{j+1}} - X_{t_j} %
\Bigr), \; \; \theta \in \beta(\mathcal{F}).
\end{equation*}

\end{theorem}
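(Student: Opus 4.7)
The plan is to prove the two directions of the Bichteler-Dellacherie characterisation separately; the forward implication (semi-martingale $\Rightarrow$ $L^0$-boundedness of the elementary stochastic integrals) is a direct consequence of the definition combined with localisation, whereas the converse is the deep part and will be the main obstacle.

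For the direction $(\Rightarrow)$, I would assume $X$ admits a decomposition $X_t = X_0 + M_t + A_t$ where $M$ is a c\`adl\`ag local martingale with $M_0 = 0$ and $A$ is a c\`adl\`ag $\mathcal{F}$-adapted process of finite variation with $A_0 = 0$. For any $\theta = \sum_{j=0}^{n-1} f_j \mathbf{1}_{]t_j, t_{j+1}]} \in \beta(\mathcal{F})$ one has
\begin{equation*}
I_X(\theta) = \sum_{j=0}^{n-1} f_j (M_{t_{j+1}} - M_{t_j}) + \sum_{j=0}^{n-1} f_j (A_{t_{j+1}} - A_{t_j}).
\end{equation*}
The second sum is dominated almost surely by the total variation of $A$ on $[0,T]$, which is finite a.s., so it is bounded in $L^0$. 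For the martingale part, one localises $M$ by a sequence of stopping times $(\tau_k)$ such that each $M^{\tau_k}$ lies in $\mathcal{H}^2$; by orthogonality of martingale increments together with $|f_j|\le 1$, one obtains a uniform $L^2$-bound on the stopped sum, which then yields boundedness in $L^0$ by undoing the localisation (using $\tau_k \to \infty$ a.s.).

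For the direction $(\Leftarrow)$, which is where the real work lies, I would proceed in three steps. First, localise $X$ at hitting times of large levels so as to reduce to the case where $X$ is bounded on $[0,T]$. Second, using the hypothesis that $I_X(\beta(\mathcal{F}))$ is bounded in $L^0$, establish that $X$ is a quasi-martingale on $[0,T]$; the core tool here is an equivalent measure change (via Yan's theorem, or equivalently a Hahn-Banach separation argument) producing a probability $\mathbb{Q}\sim\mathbb{P}$ under which the family $\{I_X(\theta) : \theta \in \beta(\mathcal{F})\}$ has uniformly bounded $\mathbb{Q}$-expectations, yielding the required finite mean-variation. Third, invoke Rao's decomposition to write the resulting quasi-martingale as a difference of two positive right-continuous supermartingales of class (D), and apply Doob-Meyer to each in order to extract the semi-martingale decomposition $X = M + A$.

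The main obstacle is clearly the second step of $(\Leftarrow)$: the passage from a merely $L^0$-type boundedness statement about elementary integrals to the existence of a genuine $L^1$-type control allowing a quasi-martingale structure. This is where non-trivial functional-analytic machinery (Yan's theorem, the Mokobodzki medial limit, or Banach-Saks type arguments as in the original proofs of Bichteler and Dellacherie) is indispensable; no purely elementary probabilistic manipulation is known to yield the decomposition directly, which is why this characterisation is regarded as one of the cornerstones of general stochastic integration theory rather than a routine exercise.
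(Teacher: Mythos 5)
First, note that the paper itself does not prove this statement: it is recalled as the classical Bichteler--Dellacherie theorem with a citation to Dellacherie and Meyer (section VIII.4), so there is no internal proof to compare against, and your attempt is effectively an attempt at a textbook theorem. Your outline does follow the standard modern route (split off a decomposition for the easy direction; localisation, equivalent change of measure, quasi-martingale, Rao and Doob--Meyer for the converse), but as it stands it is a plan rather than a proof, and the gaps are exactly at the points that carry the weight. The central one is the step you yourself flag: passing from $L^0$-boundedness of $I_X\bigl(\beta(\mathcal{F})\bigr)$ to an equivalent probability $\mathbb{Q}\sim\mathbb{P}$ with $\sup_{\theta}\mathbb{E}_{\mathbb{Q}}\bigl|I_X(\theta)\bigr|<\infty$. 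Invoking ``Yan's theorem or a Hahn--Banach separation'' names the tool but supplies no argument: you must construct the correct convex, solid, $L^0$-bounded set of random variables, verify the hypotheses of the separation result, and check that the resulting density is strictly positive and integrable enough to give the mean-variation bound; this is the heart of the theorem and is entirely omitted. In addition, even granting that step, Rao's decomposition and Doob--Meyer give you a semi-martingale decomposition \emph{under} $\mathbb{Q}$, not under $\mathbb{P}$; the converse is not finished until you add the (classical, but necessary) Girsanov--Meyer fact that the semi-martingale property is preserved under equivalent changes of measure.

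There is also a smaller flaw in your forward direction: a c\`adl\`ag local martingale need not be locally in $\mathcal{H}^2$ (its jumps may fail to be square-integrable), so the localising sequence you posit does not exist in general. The standard repair is the fundamental theorem of local martingales: write $M=N+A'$ with $N$ having bounded jumps (hence locally bounded, hence locally in $\mathcal{H}^2$) and $A'$ of finite variation, absorb $A'$ into the finite-variation part, and then your orthogonality-plus-Chebyshev argument goes through. With that fix the forward implication is fine; the converse, however, remains a sketch whose decisive functional-analytic step is missing, so the proposal cannot be accepted as a proof of the stated theorem --- though for the purposes of this paper it would be entirely legitimate to do what the authors do and simply cite Dellacherie--Meyer.
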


\bigskip
Following the same lines as those of \cite{Cher}, we introduce two definitions.

\bigskip
\begin{definition} \label{T:325}
 A stochastic process $\{X_t, 0 \le t  \le T \}$ is a weak semi-martingale with respect to 
 a filtration $\displaystyle \mathcal{F} =({\cal F}_{t})_{0 \le t \le T}$ if $X$ is 
 $\displaystyle {\cal F}$-adapted and 
 $ \displaystyle 
I_X \Bigl( \beta(\mathcal{F}) \Bigr) $ is bounded in $L^0$.

 \end{definition}

\bigskip
We insist on the fact that if a process $X$ is not a weak semi-martingale with respect to its own filtration, then 
it is not a weak semi-martingale with respect to any other filtration.

\bigskip
\begin{definition} \label{T:335}
 Let  $\{X_t, 0 \le t  \le T \}$  be a stochastic process. We call $X$ 
  a weak semi-martingale if it is a weak semi-martingale with respect to its 
 own filtration $\displaystyle \mathcal{F}^X =({\cal F}_{t}^X)_{0 \le t \le T}$. 
 We call $X$ 
  a semi-martingale if it is a semi-martingale with respect to the smallest filtration 
 that contains $\displaystyle \mathcal{F}^X $ and satisfies the usual assumptions.
 
 \end{definition}

\bigskip
Let us determine now the values of $H$ for which the smfBm is not a semi-martingale.

\bigskip
\begin{corollary} 
 \label{C:23}
If $ \; 0 < H < \frac{1}{2}$, 
then the smfBm $S^H(a,b)$ is not a weak semi-martingale.
\end{corollary}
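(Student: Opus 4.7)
The approach is a proof by contradiction, relying on the weak semi-martingale characterisation of Definition \ref{T:325} combined with Lemma \ref{L:22}. Suppose $S^H$ were a weak semi-martingale with respect to its own filtration $\mathcal{F}^{S^H}$; then by Definition \ref{T:325}, $I_{S^H}\bigl(\beta(\mathcal{F}^{S^H})\bigr)$ is bounded in $L^0$. The plan is to show that this forces the random quadratic sums
\begin{equation*}
V_n \;=\; \sum_{j=1}^n \bigl(\Delta_j^n S(t)\bigr)^2
\end{equation*}
to be tight in $n$, and then to derive a contradiction by proving that $V_n\to+\infty$ in probability when $H<\tfrac12$.

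For the tightness step I would use the discrete It\^o-type telescoping identity
\begin{equation*}
V_n \;=\; (S^H_t)^2 \;-\; (S^H_0)^2 \;-\; 2\sum_{j=1}^n S^H_{(j-1)t/n}\,\Delta_j^n S(t).
\end{equation*}
Because $S^H$ is almost surely continuous, $\sup_{s\le t}|S^H_s|<\infty$ a.s.; for fixed $C>0$ the elementary predictable process
\begin{equation*}
\theta_n^C \;=\; \frac{1}{C}\sum_{j=1}^n \bigl((-C)\vee S^H_{(j-1)t/n}\wedge C\bigr)\,\mathbf{1}_{(\,(j-1)t/n,\,jt/n\,]}
\end{equation*}
belongs to $\beta(\mathcal{F}^{S^H})$. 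On the event $\{\sup_{s\le t}|S^H_s|\le C\}$ the truncation has no effect and the identity yields $V_n\le (S^H_t)^2+2C\,|I_{S^H}(\theta_n^C)|$. The assumed $L^0$-boundedness of $I_{S^H}\bigl(\beta(\mathcal{F}^{S^H})\bigr)$, followed by letting $C\to+\infty$, then gives tightness of $(V_n)_{n\ge 1}$.

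To reach the contradiction, I would next show $V_n\to+\infty$ in probability. Lemma \ref{L:22} already gives $\mathbb{E}[V_n]=A_n\to+\infty$; by Lemma \ref{L:3} the increments $\Delta_j^n S(t)$ are jointly centered Gaussian, so Isserlis' identity gives
\begin{equation*}
\mathrm{Var}(V_n) \;=\; 2\sum_{i,j=1}^n \mathrm{Cov}\bigl(\Delta_i^n S(t),\Delta_j^n S(t)\bigr)^2.
\end{equation*}
Combining the explicit covariance formula of Lemma \ref{L:3} with Taylor expansions similar to those of Lemma \ref{L:7} and Corollary \ref{C:15}, one obtains $\mathrm{Var}(\Delta_j^n S(t))=O(n^{-2H})$ uniformly in $j$ and off-diagonal decay $|\mathrm{Cov}(\Delta_i^n S,\Delta_{i+k}^n S)|=O\bigl(n^{-2H}|k|^{2H-2}\bigr)$ for $k\ge 1$. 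Since $H<\tfrac12$ makes $4H-3<-1$, the Frobenius-type sum is $\mathrm{Var}(V_n)=O(n^{1-4H})=o(n^{2-4H})=o(A_n^2)$, and Chebyshev's inequality then yields $V_n/A_n\to 1$ in probability. Hence $V_n\to+\infty$ in probability, contradicting the tightness established above.

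The main technical obstacle is the tightness step: the telescoping-and-truncation passage from the abstract $L^0$-boundedness of $I_{S^H}\bigl(\beta(\mathcal{F}^{S^H})\bigr)$ to a uniform-in-$n$ control on $V_n$ requires checking that the approximating integrands really lie in $\beta(\mathcal{F}^{S^H})$ and that the boundary terms $(S^H_0)^2$, $(S^H_t)^2$ (finite a.s.) do not spoil the estimate. Once tightness is in hand, the Gaussian variance calculation is routine. The contradiction shows that $S^H$ is not a weak semi-martingale with respect to its own filtration, and therefore, by the remark following Definition \ref{T:325}, not a weak semi-martingale at all.
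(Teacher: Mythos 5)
Your argument is correct, but it takes a genuinely different route from the paper. The paper's proof is essentially two lines: Lemma \ref{L:22} gives $A_n\to+\infty$ for $H<1/2$, $b\neq 0$, and the conclusion is then outsourced to Proposition 2.2 of \cite{Cher}, which asserts that a process whose quadratic sums along refining subdivisions blow up cannot be a weak semi-martingale. You instead re-derive that criterion in this Gaussian setting: (i) from the $L^0$-boundedness of $I_{S^H}\bigl(\beta(\mathcal{F}^{S^H})\bigr)$ you extract tightness of $V_n=\sum_j(\Delta_j^n S(t))^2$ via the telescoping identity and the truncated integrands $\theta_n^C$, which do belong to $\beta(\mathcal{F}^{S^H})$ (the needed a.s.\ continuity of $S^H$ holds, since Bm and sfBm have continuous versions); (ii) you upgrade $\E(V_n)=A_n\to+\infty$ (Lemma \ref{L:22}) to $V_n\to+\infty$ in probability through the Isserlis identity and the off-diagonal covariance decay $O\bigl(n^{-2H}k^{2H-2}\bigr)$, which indeed follows from the second-difference structure of (\ref{eq:24}) as in Lemma \ref{L:18}. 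This makes explicit a point the paper hides inside the citation: Lemma \ref{L:22} only controls expectations, and the passage from divergence in mean to divergence in probability is precisely the Gaussian ingredient of Cheridito's proposition (there obtained from $L^0$--$L^2$ equivalence for Gaussian quadratic functionals); your Chebyshev argument supplies it directly. Your route buys a self-contained proof at the cost of length; the paper buys brevity by citation. One small slip: the summability you need in the Frobenius-type bound is $\sum_{k\ge 1}k^{4H-4}<\infty$, i.e.\ $4H-4<-1$, equivalently $H<3/4$ (not ``$4H-3<-1$''); this holds a fortiori for $H<1/2$, so the estimate $\mathrm{Var}(V_n)=O(n^{1-4H})=o(A_n^2)$ and the ensuing contradiction stand.
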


\bigskip
\begin{proof}
 A direct consequence of lemma \ref{L:22} is that
since $0 < H < \frac{1}{2}$ and $b \neq 0$, the quadratic
variation of the smfBm is infinity. To complete the proof of the corollary, 
it suffices to apply proposition 2.2 of \cite[pp. 918-919]{Cher}.

\end{proof}

\bigskip
The study of the case $ H > 3/4$ is based on a result of \cite[p. 348]{Ba}. We insist on the fact that this method is different from 
the one which was used in \cite{Cher}.

\bigskip
\begin{proposition} \label{p:24} 
For every $T > 0, \;  H \in ]\frac{3}{4}, 1[$, and $\; a \neq 0$, the smfBm
$$
S^H(a,b) = \{ S_t^H(a,b), t \in [0,T] \} 
$$
is a semi-martingale equivalent in law to
$ a \times B_t$, where $\{B_t, t \in [0,T]\}$ is a Bm.
\end{proposition}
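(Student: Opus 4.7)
The plan is to reduce the assertion to an absolute-continuity-of-laws result. By construction (see (\ref{eq:6})), we may write $S^H(a,b) = a\xi + b\xi^H$, where $\xi$ is a Brownian motion and $\xi^H$ is an independent sub-fractional Brownian motion. Since $a \neq 0$, the process $\{a\xi_t,\ t \in [0,T]\}$ is a (scaled) Brownian motion, hence a continuous semi-martingale for its natural filtration (completed and right-continuified). Because the semi-martingale property is invariant under an equivalent change of probability, it will be enough to exhibit a probability measure $\Q$ on $C([0,T])$, equivalent to the law $\P^{a\xi}$ of $a\xi$, such that under $\Q$ the canonical process has the same law as $S^H(a,b)$.

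To produce such a $\Q$, I would appeal to the theorem of \cite[p.~348]{Ba}, which provides a sufficient condition for the sum of a Brownian motion and an independent centered Gaussian process $Y$ to be equivalent in law to the Brownian motion alone. The condition amounts to the following: $Y$ must admit a representation of the form $Y_t = \int_0^t \psi(t,s)\,dW_s$, with $\psi(t,s)$ absolutely continuous in $t$ on $[s,T]$ and such that $\partial_t \psi(\cdot,\cdot)$ lies in $L^2([0,T]^2)$ (so that the paths of $Y$ belong a.s. to the Cameron--Martin space of $W$). For $H > 3/4$, the fractional Brownian motion $B^H$ is known to satisfy this condition through its Molchan--Golosov representation; the key step is to transfer this to the sub-fractional setting.

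The first main step is therefore to establish a Volterra representation $\xi^H_t = \int_0^t K^H(t,s)\,dW_s$, $t \in [0,T]$, for the sub-fractional Brownian motion with respect to some Brownian motion $W$, and to read off the regularity of $K^H$ from the corresponding representation of $B^H$ through the relation $\xi^H_t = (B^H_t + B^H_{-t})/\sqrt{2}$. The second step is the technical verification that, when $H > 3/4$, the partial derivative $\partial_t K^H(t,s)$ is square-integrable on $\{0 \le s \le t \le T\}$, with the obstruction precisely at $H = 3/4$, which explains the threshold in the statement. Once this is verified, \cite{Ba} delivers the desired equivalence of laws, and hence the semi-martingale property of $S^H(a,b)$ on $[0,T]$.

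The main obstacle is the kernel computation for $\xi^H$: the symmetrization built into the sub-fBm makes the resulting Volterra kernel less transparent than its fractional analogue, and one has to check that the $H>3/4$ regularity of the fractional kernel is preserved after symmetrization in the interior of $[0,T]$. Once this technical point is in hand, the remaining steps—applying \cite[p.~348]{Ba} and invoking invariance of the semi-martingale property under equivalent measure change—are routine.
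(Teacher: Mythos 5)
Your overall strategy is the same as the paper's: write $S^H(a,b)=a\bigl(\xi+\tfrac{b}{a}\xi^H\bigr)$, use $a\neq 0$, and invoke the result of \cite[p.~348]{Ba} to show that a Brownian motion plus an independent Gaussian perturbation is a semi-martingale equivalent in law to a Bm when $H>3/4$. The gap lies in how you propose to verify the hypothesis of \cite{Ba}. The criterion can be applied directly to the covariance function of the perturbation $\tfrac{b}{a}\xi^H$, and this is what makes the proof short: with
\begin{equation*}
R(s,t)=\frac{b^2}{a^2}\Bigl(t^{2H}+s^{2H}-\tfrac12\bigl((s+t)^{2H}+\mid t-s\mid^{2H}\bigr)\Bigr),
\end{equation*}
which is twice continuously differentiable off the diagonal, one computes
\begin{equation*}
\frac{\partial^2R(s,t)}{\partial s\,\partial t}=\frac{b^2}{a^2}\,H(2H-1)\Bigl(\mid t-s\mid^{2H-2}-(s+t)^{2H-2}\Bigr),
\end{equation*}
and this lies in $L^2([0,T]^2)$ precisely because $2(2H-2)>-1$ when $H>\tfrac34$. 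You instead route the verification through a Volterra representation $\xi^H_t=\int_0^t K^H(t,s)\,dW_s$ with $\partial_tK^H\in L^2$, and you carry out neither the construction of $K^H$ nor the integrability estimate; you explicitly flag both as the main obstacle. Since this is exactly where the threshold $H>\tfrac34$ must be produced, the argument is not complete as written.

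Moreover, the specific route you sketch for the kernel is problematic: the relation $\xi^H_t=(B^H_t+B^H_{-t})/\sqrt2$ involves the fBm at negative times, whereas the Molchan--Golosov representation you want to symmetrize represents $\{B^H_t,\ t\in[0,T]\}$ only, so the finite-interval kernel of $\xi^H$ cannot simply be read off by symmetrization; you would need to import a sub-fBm kernel representation from the literature as a separate nontrivial input, or abandon the kernel route in favour of the covariance computation above. A further small inaccuracy: the parenthetical claim that the condition means the paths of the perturbation lie a.s.\ in the Cameron--Martin space is not the correct formulation of the equivalence criterion for $B+Y$ with $Y$ independent; what is needed (and what \cite{Ba} asks for) is the square-integrable factorization of the covariance, i.e.\ $\partial^2R/\partial s\,\partial t\in L^2([0,T]^2)$. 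The surrounding framing in your proposal (equivalence in law preserves the semi-martingale property, the role of $a\neq0$) is fine.
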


\begin{proof}
The smfBm $S^H$ can be rewritten as follows 
\begin{equation*}
\forall \; t \in \R^+  \; \; \;
S_t^H(a,b) = a \Big( \xi_t + \frac{b}{a} \xi_t^H \Big)
\end{equation*}
where $\xi $ and $\xi^H $ have been introduced by equation (\ref{eq:6}). Recall that the processes $\xi$ and $\xi^H$ are independent.

\bigskip
The covariance function of
the Gaussian process $\displaystyle  \frac{b}{a} \; \xi^H $
\begin{equation*}
R(s,t) = \frac{b^2}{a^2} \Bigg( t^{2H} + s^{2H} - \frac{1}{2} \Big( 
(s+t) ^{2H} + \mid t -s \mid ^{2H} \Big) \Bigg) ,
\end{equation*}
is twice continuously differentiable on $\displaystyle %
\lbrack 0,T]^2 \setminus \{ (s,t); t =s \}$.

\bigskip
According to \cite[p. 348]{Ba}, it suffices to verify $\displaystyle \frac{\partial ^2R}{\partial s
\partial t } \in L^2([0,T]^2)$, in order to show 
that the process 
\begin{equation*}
\{ \xi_t + \frac{b}{a} \xi_t^H , t \in [0,T] \}
\end{equation*}
is a semi-martingale equivalent in law to a Bm.

\bigskip
We have for any 
$\displaystyle (s,t) \in [0,T]^2 \setminus \{ (s,t); t =s \}$%
\begin{equation*}
\frac{\partial ^2R(s,t) }{\partial s \partial t } = \frac{b^2}{a^2} \;
H(2H-1) \; \Big( \mid t-s \mid ^{2H-2} - (s+t)^{2H-2} \Big).
\end{equation*}

\bigskip
It is easy to check that if $\displaystyle H > \frac{3}{4}, $ then $%
\displaystyle \frac{\partial ^2R}{\partial s \partial t } \in L^2([0,T]^2)$.
This completes the proof of the proposition.

\end{proof}
 
\bigskip
To study the case $H \in ]1/2,3/4]$, we follow the same lines as those of 
\cite{Cher}. But many technical results have to be proved. Let us first recall the definition of a quasi-martingale.

\begin{definition} \label{T:25}
 A stochastic process $\{ X_t, 0 \le t \le T \}$ is a quasi-martingale if $\displaystyle  X_t \in L^1$ for all $ t \in [0,T],$ and 
 \begin{equation*}
 \sup_{\tau } \sum_{j=0}^{n-1} \left\| \E \Big( X_{t_{j+1}} -X_{t_j} | {\cal F}_{t_j}^X \Big) \right\| _1 < + \;\infty ,
 \end{equation*}
 \noindent
 where $\tau $ is the set of all finite partitions
 \begin{equation*}
 0= t_0 < t_1 < ... < t_n = T \hspace{1mm} of \hspace{1mm} [0,T].
 \end{equation*}
 
 \end{definition}

\bigskip
 In the following key lemma, we will specify the relation between
quasi-martingale and weak semi-martingale in the case of our process $S^H$.

\bigskip
\begin{lemma}
 \label{L:26}
 If $ S^H$ is not a quasi-martingale, then it is not a weak semi-martingale.
 \end{lemma}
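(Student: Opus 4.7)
The plan is to prove the contrapositive: \emph{if $S^H$ is a weak semi-martingale with respect to $\mathcal{F}^{S^H}$, then $S^H$ is a quasi-martingale}. This follows the strategy of \cite{Cher} adapted to the Gaussian structure of the smfBm recorded in Lemma \ref{L:3}. So I would assume that $S^H$ is a weak semi-martingale; then by Definition \ref{T:325} combined with Theorem \ref{T:3.2}, the family $\{I_{S^H}(\theta) : \theta \in \beta(\mathcal{F}^{S^H})\}$ is bounded in $L^0$, i.e.\ for every $\eta \in (0,1)$ there exists $K_\eta > 0$ such that $\P(|I_{S^H}(\theta)| > K_\eta) \le \eta$ uniformly over $\theta \in \beta(\mathcal{F}^{S^H})$.

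Next, I would fix an arbitrary finite partition $\tau : 0 = t_0 < t_1 < \dots < t_n = T$, set $\Delta_j = S^H_{t_{j+1}} - S^H_{t_j}$, and introduce the Gaussian conditional expectations $Y_j = \E(\Delta_j \mid \mathcal{F}_{t_j}^{S^H})$. Because $S^H$ is a centered Gaussian process, $Y_j$ is the orthogonal $L^2$-projection of $\Delta_j$ onto the closed Gaussian subspace generated by $\{S^H_u , u \le t_j\}$, hence $Y_j$ is itself Gaussian. Choosing $f_j = \mathrm{sign}(Y_j)$, which is $\mathcal{F}_{t_j}^{S^H}$-measurable with $|f_j| \le 1$, the step process $\theta_\tau = \sum_j f_j \mathbf{1}_{(t_j, t_{j+1}]}$ belongs to $\beta(\mathcal{F}^{S^H})$, and the tower property yields the identity
$$ \E \, I_{S^H}(\theta_\tau) = \sum_{j=0}^{n-1} \E[f_j \, Y_j] = \sum_{j=0}^{n-1} \|Y_j\|_1. $$
The aim then reduces to showing that this quantity stays bounded uniformly in $\tau$, which is precisely the quasi-martingale condition of Definition \ref{T:25}.

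The main obstacle will be passing from the Bichteler-Dellacherie $L^0$-boundedness to a uniform $L^1$-bound on $I_{S^H}(\theta_\tau)$. To handle it, I would write $I_{S^H}(\theta_\tau) = A_\tau + M_\tau$ with $A_\tau = \sum_j |Y_j| \ge 0$ and the mean-zero remainder $M_\tau = \sum_j f_j (\Delta_j - Y_j)$. Since $A_\tau$ is a Lipschitz functional of the finite-dimensional Gaussian vector $(S^H_{t_0}, \dots, S^H_{t_n})$, Borell's concentration inequality provides sharp control of $|A_\tau - \E A_\tau|$, whereas the orthogonality $\E(f_j (\Delta_j - Y_j) \mid \mathcal{F}_{t_j}^{S^H}) = 0$ bounds the second moment of $M_\tau$ by the mean quadratic variation along $\tau$. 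A contradiction argument would then proceed: if $\E A_\tau$ were unbounded along some sequence of partitions, the concentration of $A_\tau$ around its mean together with the moment bound on $M_\tau$ would force $|I_{S^H}(\theta_\tau)| > K_\eta$ with probability greater than $\eta$, contradicting the Bichteler-Dellacherie estimate. The technical difficulty lies in controlling the Lipschitz constant of $A_\tau$ and the variance of $M_\tau$ simultaneously, uniformly in $\tau$, so that the contradiction is genuine; this is the step where the Gaussian nature of $S^H$ is essential.
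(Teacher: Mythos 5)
Your first two steps (arguing the contrapositive, choosing $f_j=\mathrm{sgn}\,Y_j$ with $Y_j=\E(\Delta_j\mid\mathcal{F}_{t_j}^{S^H})$, and using the tower property to get $\sum_j\|Y_j\|_1$ as a lower bound for an $L^1$-norm of $I_{S^H}(\theta_\tau)$) are exactly the computation in the paper. The gap is in the step you yourself flag as the main obstacle: passing from the Bichteler--Dellacherie $L^0$-boundedness to a uniform bound on $\E A_\tau=\sum_j\|Y_j\|_1$. The paper does this in one line by invoking Stricker's theorem (Theorem 1 of the reference \cite{ST}): for a \emph{Gaussian} process that is a weak semi-martingale, the family $I_{S^H}(\beta(\mathcal{F}^{S^H}))$ is bounded in $L^2$, hence in $L^1$, and the quasi-martingale bound follows immediately from the sign/tower identity. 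Your proposed substitute — Borell concentration for $A_\tau$ plus a variance bound for $M_\tau$ — does not close this gap as sketched. First, $Y_j$ is the projection of $\Delta_j$ onto the Gaussian space generated by the whole past $\{S_u^H,\,u\le t_j\}$, not a function of the finite vector $(S^H_{t_0},\dots,S^H_{t_n})$; that is repairable (Borell's inequality holds on abstract Wiener space), but the quantitative problem is not. The Gaussian concentration of $A_\tau=\sum_j|Y_j|$ is at the scale of its Cameron--Martin Lipschitz constant, which equals $\max_{\epsilon_j=\pm1}\bigl\|\sum_j\epsilon_j Y_j\bigr\|_{L^2}$, and this can be of the same order as $\E A_\tau\asymp\sum_j\|Y_j\|_2$ (take the $Y_j$ strongly correlated). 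So ``$A_\tau$ concentrates around a diverging mean'' is not automatic; making it work would require nontrivial control of the covariance structure of the conditional expectations $Y_j$, precisely the objects that are intractable here (the paper never computes them; in Propositions \ref{p:27} and \ref{p:28} it only lower-bounds $\|Y_j\|_1$ by conditioning on smaller $\sigma$-fields). In effect, the estimate you would need to prove is the content of Stricker's theorem, so the argument is circular in the place where it matters.

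A secondary point: your bound $\E M_\tau^2\le\sum_j\E\Delta_j^2$ is fine, but this quantity is uniformly bounded over partitions only when $H\ge\frac12$ (by (\ref{eq12}) and $\sum_j(t_{j+1}-t_j)^{2H}\le T^{2H}$); for $H<\frac12$ it diverges, so even with a valid concentration step your contradiction would only cover part of the range, whereas Lemma \ref{L:26} is stated, and proved in the paper, for all $H$. The clean fix is simply to cite \cite{ST}: $L^0$-boundedness of $I_{S^H}(\beta(\mathcal{F}^{S^H}))$ for the Gaussian process $S^H$ gives $L^2$-boundedness, hence $L^1$-boundedness, and then your sign/tower identity yields $\sup_\tau\sum_j\|Y_j\|_1<\infty$, i.e.\ the quasi-martingale property.
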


\bigskip
\begin{proof}

Let us assume that $S^H$ is a weak 
semi-martingale. Then, by theorem 1 of \cite{ST}, we have 
\begin{equation*}
I_{S^H} \Bigl( \beta(\mathcal{F}^{S^H}) \Bigr),
\end{equation*}
\noindent which was defined in theorem \ref{T:3.2}, is bounded in $L^2$, and therefore in $L^1$.

\bigskip But, for any partition $0 = t_0 < t_1< .. <t_n = T$, 
\begin{equation*}
\sum_{j=0}^{n-1 } \; sgn \Biggl( \E \Biggl( S_{t_{j+1}}^H - S_{t_j}^H \mid 
\mathcal{F}_{t_j}^{S^H} \Biggr)\Biggr) \; \mathbf{1}_{ ]t_j, t_{j+1}]} \in
\beta(\mathcal{F}^{S^H}),
\end{equation*}
\noindent and 
\begin{equation*}
\begin{array}{rcl}
&  & \displaystyle \Bigg| \Bigg| I_{S^H} \Biggl( \sum_{j=0}^{n-1} \; sgn %
\Biggl( \E \Biggl( S_{t_{j+1}}^H - S_{t_j}^H \mid \mathcal{F}_{t_j}^{S^H} %
\Biggr)\Biggr) \; \mathbf{1}_{ ]t_j, t_{j+1}]} \Biggr) \Bigg| \Bigg|_1 \\ 
\noalign{\vskip 3mm} & = & \displaystyle \Bigg| \Bigg| \sum_{j=0}^{n-1} \;
sgn \Biggl( \E \Biggl(S_{t_{j+1}}^H - S_{t_j}^H \mid \mathcal{F}_{t_j}^{S^H} %
\Biggr)\Biggr) \; \Bigl( S_{t_{j+1}}^H - S_{t_j}^H \Bigr) \Bigg| \Bigg|_1 \\ 
\noalign{\vskip 3mm} & \geq & \displaystyle \sum_{j=0}^{n-1} \; \E \Biggl( \;
sgn \Biggl( \E \Biggl(S_{t_{j+1}}^H - S_{t_j}^H \mid \mathcal{F}_{t_j}^{S^H} %
\Biggr)\Biggr) \; \Bigl( S_{t_{j+1}}^H - S_{t_j}^H \Bigr) \Biggr) \\ 
\noalign{\vskip 3mm} & = & \displaystyle \sum_{j=0}^{n-1} \;  \E \Biggl( \E %
\Biggl( \; sgn \Biggl( \E \Biggl(S_{t_{j+1}}^H - S_{t_j}^H \mid \mathcal{F}%
_{t_j}^{S^H} \Biggr)\Biggr) \; \Bigl( S_{t_{j+1}}^H - S_{t_j}^H \Bigr) %
\Bigg| \mathcal{F}_{t_j}^{S^H} \Biggr) \Biggr) \\ 
\noalign{\vskip 3mm} & = & \displaystyle \sum_{j=0}^{n-1} \; \E \Biggl( sgn %
\Biggl( \E \Biggl(S_{t_{j+1}}^H - S_{t_j}^H \mid \mathcal{F}_{t_j}^{S^H} %
\Biggr)\Biggr) \E \Biggl( \; \; \Bigl( S_{t_{j+1}}^H - S_{t_j}^H \Bigr) %
\Bigg| \mathcal{F}_{t_j}^{S^H} \Biggr) \Biggr) \\ 
\noalign{\vskip 3mm} & = & \displaystyle \sum_{j=0}^{n-1} \; \Bigg| \Bigg| \E %
\Biggl( S_{t_{j+1}}^H - S_{t_j}^H \mid \mathcal{F}_{t_j}^{S^H} \Biggr) %
\Bigg| \Bigg|_1.%
\end{array}%
\end{equation*}

\bigskip \noindent Then, $S^H$ is a quasi-martingale. The proof of the
lemma is complete. 

\end{proof}

\bigskip 
The following lemmas deal with the two last cases $1/2 < H < 3/4$
and $H = 3/4$.

\bigskip
\begin{proposition} \label{p:27} 
If $\displaystyle H \in \Big] \frac{1}{2}, \frac{3}{4} \Big[$, then the smfBm $S^H(a,b)$ is not a quasi-martingale.
\end{proposition}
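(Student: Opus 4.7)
The plan is to apply lemma \ref{L:26}: it suffices to exhibit a sequence of partitions of $[0,T]$ along which
\begin{equation*}
\Sigma_n := \sum_{j=0}^{n-1} \bigl\| \E\bigl( S^H_{t^n_{j+1}} - S^H_{t^n_j} \mid \mathcal{F}^{S^H}_{t^n_j} \bigr) \bigr\|_1 \longrightarrow +\infty.
\end{equation*}
I would take the uniform partitions $t_j^n = jT/n$, $j = 0, \ldots, n$. Since $S^H$ is centered Gaussian, each conditional expectation $\E(S^H_{t^n_{j+1}} - S^H_{t^n_j} \mid \mathcal{F}^{S^H}_{t^n_j})$ is the orthogonal $L^2$-projection of the increment onto the closed Gaussian subspace generated by $\{S^H_u : u \le t^n_j\}$, hence is itself a centered Gaussian random variable. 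Applying $\|Z\|_1 = \sqrt{2/\pi}\,\sqrt{\E Z^2}$ for such $Z$, the goal reduces to
\begin{equation*}
\sum_{j=0}^{n-1} \sigma_{j,n} \longrightarrow +\infty, \qquad \sigma_{j,n}^2 := \mathrm{Var}\bigl( \E(S^H_{t^n_{j+1}} - S^H_{t^n_j} \mid \mathcal{F}^{S^H}_{t^n_j}) \bigr).
\end{equation*}

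The heart of the proof is a lower bound on each $\sigma_{j,n}^2$. Using that the variance of the conditional expectation is monotone under enlargement of the conditioning $\sigma$-algebra, one projects onto the finite-dimensional span of the past sampled values $\{S^H_{t^n_0}, \ldots, S^H_{t^n_j}\}$; this reduces the problem to an explicit Gaussian linear-regression computation involving only the covariance (\ref{eq:7}) and the increment covariance formulas of lemma \ref{L:13}. A third-order Taylor expansion in the same flavour as in lemma \ref{L:18} --- which already exhibits the threshold $H = 3/4$ through the factor $(2H-1)$ and the decay exponent $n^{2H-3}$ --- then shows that the positively correlated long-range dependence of the $\xi^H$ component (corollary \ref{C:9}) produces a lower bound on $\sigma_{j,n}$ whose sum over $j$ is unbounded in $n$ precisely when $H \in \,]1/2, 3/4[$, thereby proving the proposition.

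The main obstacle is deriving this uniform lower bound on $\sigma_{j,n}^2$. The smfBm lacks stationary increments (see lemma \ref{L:16} and corollary \ref{C:9}), so the $\sigma_{j,n}^2$ genuinely depend on $j$ and not only on $n$, and the conditioning $\sigma$-algebra $\mathcal{F}^{S^H}_{t^n_j}$ is generated by the combined process $a\xi + b\xi^H$, so that the Brownian and sub-fractional contributions cannot be decoupled. The delicate point is to verify that, despite this coupling, the fractional long-range dependence of $\xi^H$ is strong enough to dominate the Brownian contribution at the scale $T/n$ and to force $\Sigma_n$ to diverge --- this is exactly the analogue for the sfBm of Cheridito's calculation for the mfBm, and the non-stationarity (already visible in the extra terms appearing in proposition \ref{p:6} and lemma \ref{L:13}) is the main additional technical difficulty.
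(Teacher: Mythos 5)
Your overall framework is the right one (lemma \ref{L:26}, uniform partitions, the Gaussian identity $\|Z\|_1=\sqrt{2/\pi}\,\|Z\|_2$, and lower-bounding the projection by conditioning on less information), but the proposal has a genuine gap: the decisive quantitative step --- a lower bound on $\sigma_{j,n}$ whose sum over $j$ diverges for $1/2<H<3/4$ --- is never derived. You yourself flag it as ``the main obstacle'' and a ``delicate point'' to be verified; but that estimate \emph{is} the content of the proposition, so what you have is a plausible plan rather than a proof. Moreover, the route you sketch for that estimate is both harder than necessary and aimed at the wrong asymptotics. Projecting onto the span of all past sampled values $\{S^H_{t^n_0},\dots,S^H_{t^n_j}\}$ forces you to control the inverse of the full increment covariance matrix (largest-eigenvalue bounds via Gershgorin), which is exactly the heavier machinery the paper reserves for the borderline case $H=3/4$ (proposition \ref{p:28}). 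For $H\in\,]1/2,3/4[$ the paper conditions on the \emph{single} previous increment $\Delta_j^n S^H$ only, so the regression is explicit: $\E\big(\Delta_{j+1}^n S^H\mid\Delta_j^n S^H\big)=\frac{Cov(\Delta_{j+1}^n S^H,\Delta_j^n S^H)}{Cov(\Delta_j^n S^H,\Delta_j^n S^H)}\,\Delta_j^n S^H$, and the $j$-th term of the sum equals $Cov(\Delta_{j+1}^n S^H,\Delta_j^n S^H)\big/\sqrt{Cov(\Delta_j^n S^H,\Delta_j^n S^H)}$. By lemma \ref{L:13} the numerator is $\frac{b^2T^{2H}}{2n^{2H}}\big(2^{2H}(2j^{2H}+1)-2-(2j+1)^{2H}-(2j-1)^{2H}\big)$, whose bracket tends to $2^{2H}-2>0$, while proposition \ref{p:6} together with $2H>1$ bounds the denominator by a constant times $n^{-1/2}$; a C\'esaro argument then yields a lower bound of order $n^{3/2-2H}\to+\infty$.

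A second, related misdirection: lemma \ref{L:18} is not the relevant tool. Its third-order Taylor expansion governs the covariance of increments over widely separated intervals and gives the decay $n^{2H-3}$ used for short-range dependence; what drives the divergence here is the covariance of \emph{adjacent} increments, of order $(2^{2H-1}-1)(T/n)^{2H}$, measured against an increment standard deviation of order $n^{-1/2}$ --- that ratio summed over $j$ is where the exponent $3/2-2H$ and hence the threshold $H=3/4$ come from. Also, the non-stationarity you worry about is a minor issue in this regime: in the one-increment conditioning the non-stationary corrections are lower order and are absorbed by the asymptotics of $u_j/v_j$, so no uniform-in-$j$ estimate on the full multivariate regression is needed.
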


\bigskip
\begin{proof} 
For $n \in \N$ and $j \in \{ 1, 2,...,
n \}$, let us denote 
\begin{equation*}
\Delta_{j}^n S^H = S^H_{\frac{Tj}{n}} - S^H_{\frac{T(j-1)}{n}}.
\end{equation*}
Since conditional expectation is a contraction with respect to the $L^1-$ norm, we
have for all $n \in \N$ and all $j= 1, ..., n-1$, 
\begin{equation*}
\left\| \E \Big( \Delta_{j+1}^n S^H | \Delta_{j}^n S^H \Big) \right\| _1 \le
\left\| \E \Big( \Delta_{j+1}^n S^H | \mathcal{F}_{\frac{Tj}{n}}^{S^H} \Big) %
\right\| _1.
\end{equation*}
Moreover, since $\displaystyle \E \Big( \Delta_{j+1}^n S^H | \Delta_{j}^n S^H %
\Big)$ is a centered Gaussian random variable, 
\begin{equation*}
\left\| \E \Big( \Delta_{j+1}^n S^H | \Delta_{j}^n S^H \Big) \right\| _1 = 
\sqrt{\frac{2}{\pi }} \left\| \E \Big( \Delta_{j+1}^n S^H | \Delta_{j}^n S^H %
\Big) \right\| _2.
\end{equation*}

Consequently, 
\begin{equation}
\label{eq50}
\begin{array}{rcl}
\displaystyle \sum_{j=1}^{n-1} \left\| \E \Big( \Delta_{j+1}^n S^H | \mathcal{%
F}_{\frac{Tj}{n}}^{S^H} \Big) \right\| _1 & \ge & \displaystyle \sqrt{\frac{2%
}{\pi }} \sum_{j=1}^{n-1} \left\| \E \Big( \Delta_{j+1}^n S^H | \Delta_{j}^n
S^H \Big) \right\| _2 \\ 
\noalign{\vskip 3mm} & = & \displaystyle \sqrt{\frac{2}{\pi }}
\sum_{j=1}^{n-1} \left\| \frac{ Cov \Big( \Delta_{j+1}^n S^H , \Delta_{j}^n
S^H \Big) }{ Cov \Big( \Delta_{j}^n S^H , \Delta_{j}^n S^H \Big) }
\Delta_{j}^n S^H \right\| _2 \\ 
\noalign{\vskip 3mm} & = & \displaystyle \sqrt{\frac{2}{\pi }}
\sum_{j=1}^{n-1} \frac{ Cov \Big( \Delta_{j+1}^n S^H , \Delta_{j}^n S^H %
\Big) }{ \sqrt{Cov \Big( \Delta_{j}^n S^H , \Delta_{j}^n S^H \Big) }} %
:=  \displaystyle \sqrt{\frac{2}{\pi }}  \; I_n.
\end{array}%
\end{equation}

\bigskip
We have by lemma \ref{L:13},

\begin{equation*}
\begin{array}{rcl}
\displaystyle Cov \Big( \Delta_{j+1}^n S^H , \Delta_{j}^n S^H \Big) & = & %
\displaystyle C_{\frac{T(j-1)}{n} , \frac{Tj}{n}, \frac{Tj}{n},\frac{T(j+1)}{%
n}} \\ 
\noalign{\vskip 3mm} & = & \displaystyle \frac{b^2 \; T^{2H}}{2 \; n^{2H}} \; \Big( %
2^{2H} (2j^{2H} + 1) - 2 - (2j +1)^{2H} -(2j-1)^{2H} \Big) .%
\end{array}%
\end{equation*}

\bigskip
Combining proposition \ref{p:6} with the fact that $2H > 1$, we get

\begin{equation*}
\begin{array}{rcl}
&  & \displaystyle Cov \Big( \Delta_{j}^n S^H , \Delta_{j}^n S^H \Big) \\ 
\noalign{\vskip 2mm} & = & \displaystyle a^2 \frac{T}{n} + \frac{b^2 \; T^{2H}}{%
n^{2H}} \Bigg( - 2^{2H-1} (j^{2H} + (j-1)^{2H}) + (2j-1)^{2H} + 1 \Bigg) \\ 
\noalign{\vskip 2mm} & \le & \displaystyle \displaystyle \frac{1}{n} \Bigg( %
a^2 \; T + b^2 \; T^{2H} \Big( - 2^{2H-1} (j^{2H} + (j-1)^{2H}) + (2j-1)^{2H} + 1 %
\Big) \Bigg). \\ 
&  & 
\end{array}%
\end{equation*}

\bigskip
\noindent
Then, 
\begin{equation*}
I_n \ge \frac{b^2 \;  T^{2H}}{2\; n^{2H-\frac{1}{2}}} \sum_{j=1}^{n-1} \frac{u_j}{%
v_j} 
=
\frac{b^2 \;  T^{2H}}{2\; n^{2H-\frac{3}{2}}} \times  \Biggl(  \frac{1}{n} \sum_{j=1}^{n-1} \frac{u_j}{v_j} \Biggr)
,
\end{equation*}
where we have for any $n \in \N^*$,

\begin{equation*}
u_n= 2^{2H} (2n^{2H} + 1) - 2 - (2n +1)^{2H} -(2n-1)^{2H}
\end{equation*}
and 
\begin{equation*}
v_n = \sqrt{ a^2 \; T + b^2 \; T^{2H} \Big( - 2^{2H-1} (n^{2H} + (n-1)^{2H}) +
(2n-1)^{2H} + 1 \Big) } .
\end{equation*}

 \bigskip
Since
\begin{equation*}
\lim_{n \rightarrow + \infty } \frac{u_n}{v_n} = \frac{2^{2H}-2}{\sqrt{a^2\; T
+ b^2 \; T^{2H}}} ,
\end{equation*}
\noindent 
we have by C\'{e}saro
theorem that
\begin{equation*}
\lim_{n \rightarrow + \infty } \frac{1}{n} \sum_{j=1}^{n-1} \frac{u_j}{v_j}
= \frac{2^{2H}-2}{\sqrt{a^2 \; T + b^2 \; T^{2H}}}.
\end{equation*}

\bigskip
Hence, since $\displaystyle  \frac{1}{2} < H < \frac{3}{4}$ and 
$ \frac{2^{2H}-2}{\sqrt{a^2 \; T + b^2 \; T^{2H}}} > 0$, we have
$\displaystyle \lim_{n \rightarrow \infty } I_n = + \infty$. Then, we get by using (\ref{eq50}) that
\begin{equation*}
\sup_{\tau } \sum_{j=0}^{n-1} \left\| \E \Big( S^H_{t_{j+1}} -S^H_{t_j} | 
\mathcal{F}_{t_j}^{S^{H}} \Big) \right\| _1 = + \; \infty .
\end{equation*}

\bigskip
This completes the proof of the lemma.

\end{proof}

\bigskip
\begin{proposition} \label{p:28} 
The smfBm 
$\displaystyle S^{\frac{3}{4}}(a,b) $ 
is not a quasi-martingale.
\end{proposition}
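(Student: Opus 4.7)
The plan is to show that $S^{3/4}(a,b)$ fails to be a quasi-martingale and then invoke Lemma \ref{L:26} to deduce it is not a weak semi-martingale. I would follow the same overall strategy as in Proposition \ref{p:27}: use uniform partitions $t_j = Tj/n$, exploit the Gaussianity of $\E(\Delta_{j+1}^n S^H \mid \mathcal{F}_{Tj/n}^{S^H})$ to convert the $L^1$-norm into an $L^2$-norm via $\|X\|_1 = \sqrt{2/\pi}\,\|X\|_2$ for centered Gaussian $X$, and then bound the latter from below by a projection inequality applied to past increments.

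The principal obstacle is that the direct adaptation of Proposition \ref{p:27} degenerates at the critical value $H = 3/4$. There the factor $n^{3/2 - 2H}$ which produced divergence in the sub-critical range equals $1$, while Taylor expansion yields $u_j \to 2^{3/2}-2$ and the bracket in $v_j^2$ tends to $1$. Consequently Cesaro's theorem only gives a finite positive limit for the naive lower-bound sum, and divergence of the quasi-martingale variation cannot be concluded from the one-step projection alone.

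To overcome this, I would sharpen both the covariance estimate and the projection. First, I would avoid the loose variance upper bound $\mathrm{Var}(\Delta_j^n S^H) \leq (a^2 T + b^2 T^{2H}[\cdots(j)])/n$ used in Proposition \ref{p:27} and work instead with the exact expression $a^2 T/n + b^2 T^{3/2}[\cdots(j)]/n^{3/2}$; this already suffices to force $I_n \sim c\,n^{1/4} \to +\infty$ in the degenerate subcase $a = 0$. Second, for the harder subcase $a \neq 0$, I would replace the projection onto $\sigma(\Delta_j^n S^H)$ by a projection onto the whole past filtration $\mathcal{F}_{Tj/n}^{S^H}$, thereby capturing the contributions of \emph{all} past increments. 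At $H = 3/4$ Lemma \ref{L:18} gives the borderline decay $|C(p,n)| \sim n^{2H-3} = n^{-3/2}$, and summing all pairwise squared covariances, weighted inversely by the variances, should furnish the required divergence.

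The main technical step will be carrying out this refined estimation rigorously. Because smfBm has non-stationary increments, the Gram matrix of $(\Delta_k^n S^H)_{k \leq j}$ is not Toeplitz, precluding a direct spectral argument. I would handle this by using the explicit covariance formula of Lemma \ref{L:13}, decomposing the Gram matrix into a dominant Brownian diagonal part (of order $a^2 T/n$) plus a sub-fractional correction, and controlling the asymptotics as $j, n \to \infty$ by perturbation arguments together with the sharp rate $n^{-3/2}$. Once $\sup_{\tau} \sum_{j} \|\E(S^H_{t_{j+1}} - S^H_{t_j} \mid \mathcal{F}_{t_j}^{S^H})\|_1 = +\infty$ is established, Lemma \ref{L:26} completes the proof.
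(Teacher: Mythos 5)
Your overall architecture for the main case $a\neq 0$ is the paper's: condition on all the past increments $\Delta_1^n S^{3/4},\dots,\Delta_j^n S^{3/4}$ (this, rather than the full filtration $\mathcal{F}^{S^{3/4}}_{Tj/n}$, is what the Gram-matrix formulation actually uses), write the covariance matrix as $A=\frac{a^2T}{n}I+b^2C$ with $C$ the sub-fractional correction, and bound $\|\E(\Delta_{j+1}^n\mid \Delta_j^n,\dots,\Delta_1^n)\|_2^2=m^TA^{-1}m\ge \|m\|_2^2\,\lambda^{-1}$, the paper controlling the top eigenvalue by Gershgorin, $\lambda\le \frac{a^2T}{n}+\frac{5b^2T^{3/2}}{2n}$. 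Your observation that for $a=0$ the one-step projection of Proposition \ref{p:27}, used with the exact variance, already gives divergence of order $n^{1/4}$ is correct, although the paper treats both cases uniformly.

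The genuine gap is the decisive quantitative step for $a\neq0$, and the heuristic you offer for it points the wrong way. You appeal to the rate of Lemma \ref{L:18}, $|C(p,n)|\sim n^{2H-3}=n^{-3/2}$; but that lemma describes an increment pinned near the origin against a far-away one, and its very conclusion is $\sum_n|C(p,n)|<+\infty$ (short-range dependence). If the covariances of $\Delta_{j+1}^n S^{3/4}$ with the past increments were only of size $(T/n)^{3/2}$ times $(\mathrm{lag})^{-3/2}$, then $\|m\|_2^2=\sum_k \mathrm{Cov}(\Delta_{j+1}^n,\Delta_k^n)^2=O(n^{-3})$ uniformly in $j$, and after multiplying by the spectral factor of order $n$, taking square roots and summing over $j$, the lower bound you obtain stays bounded in $n$: no divergence can be concluded by that route. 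The actual divergence at $H=3/4$ is only of order $\sqrt{\ln n}$ and comes from the opposite regime, namely increments far from the origin and close to each other, which behave like fBm increments: the paper's Lemma \ref{L:29} gives the pointwise lower bound $\mathrm{Cov}(\Delta_{j+1}^n,\Delta_k^n)\ge \frac{3b^2T^{3/2}}{8n^{3/2}}\bigl((j-k+1)^{-1/2}-(j+k-1)^{-1/2}\bigr)$, whose squares sum (via the integral computation \eqref{eq:36}) to the logarithmic gain $\ln(2j-1)+2\arccos\bigl(\frac{j-1}{j}\bigr)-\pi$; combined with $\lambda^{-1}\ge\alpha n$ and C\'esaro, this yields $\sum_{j}\|\E(\Delta_{j+1}^n\mid\mathcal{F}^{S^{3/4}}_{Tj/n})\|_1\ge c\sqrt{\ln n}\to+\infty$. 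Without such a near-diagonal lower bound and an explicit spectral estimate in place of the unspecified ``perturbation arguments,'' your plan does not establish the divergence, and as justified it would fail precisely at this step.
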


\bigskip
\begin{proof} 
Since conditional expectation is a
contraction with respect to the $L^1-$~norm, we have for all $n \in \N$ and all $j=
1, ..., n-1$, 
\begin{equation*}
\left\| \E \Big( \Delta_{j+1}^n S^{\frac{3}{4}} | \Delta_{j}^n S^{\frac{3}{4}%
},..., \Delta_{1}^n S^{\frac{3}{4}} \Big) \right\| _1 \le \left\| \E \Big( %
\Delta_{j+1}^n S^{\frac{3}{4}} | \mathcal{F}_{\frac{Tj}{n}}^{S^{\frac{3}{4}%
}} \Big) \right\| _1.
\end{equation*}
Moreover, since $\displaystyle \E \Big( \Delta_{j+1}^n S^{\frac{3}{4}} |
\Delta_{j}^n S^{\frac{3}{4}},..., \Delta_{1}^n S^{\frac{3}{4}} \Big)$ is a
centered Gaussian random variable, 
\begin{equation*}
\left\| \E \Big( \Delta_{j+1}^n S^{\frac{3}{4}} | \Delta_{j}^n S^{\frac{3}{4}%
},..., \Delta_{1}^n S^{\frac{3}{4}} \Big) \right\| _1 = \sqrt{\frac{2}{\pi }}
\left\| \E \Big( \Delta_{j+1}^n S^{\frac{3}{4}} | \Delta_{j}^n S^{\frac{3}{4}%
},..., \Delta_{1}^n S^{\frac{3}{4}} \Big) \right\| _2.
\end{equation*}
Consequently, 
\begin{equation*}
\begin{array}{rcl}
\displaystyle \sum_{j=1}^{n-1} \left\| \E \Big( \Delta_{j+1}^n S^{\frac{3}{4}%
} | \mathcal{F}_{\frac{j}{n}}^{S^{\frac{3}{4}}} \Big) \right\| _1 & \ge & %
\displaystyle \sqrt{\frac{2}{\pi }} \sum_{j=1}^{n-1} \left\| \E \Big( %
\Delta_{j+1}^n S^{\frac{3}{4}} | \Delta_{j}^n S^{\frac{3}{4}},...,
\Delta_{1}^n S^{\frac{3}{4}} \Big) \right\| _2 ,%
\end{array}%
\end{equation*}

\noindent and the lemma is proved if we show that 
\begin{equation} 
 \label{eq:26}
\lim_{n \rightarrow \infty} \sum_{j=1}^{n-1} \left\| \E \Big( \Delta_{j+1}^n
S^{\frac{3}{4}} | \Delta_{j}^n S^{\frac{3}{4}},..., \Delta_{1}^n S^{\frac{3}{%
4}} \Big) \right\| _2 = + \; \infty .
\end{equation}

For $n \in \N$ and $j= 1, ..., n-1$, 
\begin{equation*}
\Big( \Delta_{j+1}^n S^{\frac{3}{4}} , \Delta_{j}^n S^{\frac{3}{4}},...,
\Delta_{1}^n S^{\frac{3}{4}} \Big)
\end{equation*}
is a Gaussian vector. Therefore,

\begin{equation}  \label{eq:27}
\E \Big( \Delta_{j+1}^n S^{\frac{3}{4}} | \Delta_{j}^n S^{\frac{3}{4}},...,
\Delta_{1}^n S^{\frac{3}{4}} \Big) = \sum_{k=1}^{j}\;  b_k \; \Delta_{k}^n \; S^{%
\frac{3}{4}} ,
\end{equation}
where the vector $\displaystyle b = \left( 
\begin{array}{c}
b_1 \\ 
\vdots \\ 
b_j%
\end{array}
\right) $ solves the system of linear equations 
\begin{equation}  \label{eq:28}
m = A b,
\end{equation}
in which $m$ is a $j-$vector whose $k-th$ component $m_k$ is 
\begin{equation*}
Cov \Big( \Delta_{j+1}^n S^{\frac{3}{4}} , \Delta_{k}^n S^{\frac{3}{4}} \Big)
\end{equation*}
and $A$ is the covariance matrix of the Gaussian vector 
\begin{equation*}
\Big( \Delta_{1}^n S^{\frac{3}{4}}, ..., \Delta_{j}^n S^{\frac{3}{4}} \Big) .
\end{equation*}

Note that $A$ is symmetric and, since the random variables 
\begin{equation*}
\Delta_{1}^n S^{\frac{3}{4}}, ..., \Delta_{j}^n S^{\frac{3}{4}}
\end{equation*}
are lineary independent, $A$ is also positive definite. It follows from (\ref{eq:27})
and (\ref{eq:28}) that

\begin{equation}  \label{eq:29}
\left\| \E \Big( \Delta_{j+1}^n S^{\frac{3}{4}} | \Delta_{j}^n S^{\frac{3}{4}%
},..., \Delta_{1}^n S^{\frac{3}{4}} \Big) \right\| _2^2 = b^T A b = m^T
A^{-1} m \ge \left\| m \right\| _2^2 \lambda ^{-1} ,
\end{equation}

\noindent where $\lambda $ is the largest eigenvalue of the matrix $A$. Set $%
I$ the identity matrix and $C = (C_{i,k})_{1 \leq i,k \leq j} $ the
covariance matrix of the increments of the sfBm with index $3/4$. We have 
\begin{equation*}
A = \frac{a^2T}{n} \; I + b^2 \; C,
\end{equation*}
\noindent and consequently

\begin{equation}  \label{eq:30}
\lambda = \frac{a^2T}{n} + b^2 \; \mu,
\end{equation}

\noindent where $\mu $ is the largest eigenvalue of the matrix $C$. We deduce also from
lemma \ref{L:13}

\begin{equation*}
\begin{array}{rcl}
\displaystyle C_{ik} & = & \displaystyle \frac{T^{3/2}}{2 \; n^{3/2}} \; \Biggl( %
\Bigl( \mid k - i \mid +1 \Bigr)^{3/2} - 2 \mid k - i \mid^{3/2} + \mid \mid
k - i \mid -1 \mid^{3/2} \\ 
\noalign{\vskip 3mm} &  & \displaystyle + 2 \; (k+i-1)^{3/2} - (k+i)^{3/2} -
(k+i-2)^{3/2} \Biggr) \\ 
\noalign{\vskip 3mm} & = & \displaystyle \frac{T^{3/2}}{2 \; n^{3/2}} \; (
E_{ik} + F_{ik} ),%
\end{array}%
\end{equation*}

where 
\begin{equation*}
E_{ik} = 2 \Biggl( \frac{ \Bigl( \mid k - i \mid +1 \Bigr)^{3/2} + \mid \mid
k - i \mid -1 \mid^{3/2} }{2} - \mid k - i \mid^{3/2} \Biggr)
\end{equation*}
and 
\begin{equation*}
F_{ik} = 2 \Biggl( (k+i-1)^{3/2} - \frac{(k+i)^{3/2} + (k+i-2)^{3/2}}{2} %
\Biggr) .
\end{equation*}
\noindent Note that the convexity of the function $\displaystyle x
\rightarrow x^{3/2}, \; x \geq 0$, implies that $\displaystyle  E_{ik} \geq
0 $ and $\displaystyle F_{ik} \leq 0$. Moreover, since $H = 3/4 > 1/2$,
corollary \ref{C:14} yields $\; C_{ik} \geq 0$.

\bigskip So, using the Gershgorin circle theorem \cite{Gol} we obtain 
\begin{equation*}
\mu \leq \max_{k=1,..,j} \; \sum_{k=1}^j \; \mid C_{ik} \mid \le \frac{T^{3/2}%
}{2 \; n^{3/2}} \max_{k=1,..,j} \; \sum_{k=1}^j E_{ik} ,
\end{equation*}
\noindent and consequently 
\begin{equation*}
\begin{array}{rcl}
\displaystyle \mu & \leq & \displaystyle \frac{T^{3/2}}{n^{3/2}} \;
\sum_{k=1}^j \; \Biggl( 2 \Biggl( \frac{ \Bigl( \mid k - 1 \mid +1 \Bigr)%
^{3/2} + \mid \mid k - 1 \mid -1 \mid^{3/2} }{2} - \mid k - 1 \mid^{3/2} %
\Biggr) \\ 
\noalign{\vskip 3mm} & = & \displaystyle \frac{T^{3/2}}{n^{3/2}} \;
\sum_{k=1}^j \; \Biggl( \Bigl( \mid k - 1 \mid +1 \Bigr)^{3/2} + \mid \mid k
- 1 \mid -1 \mid^{3/2} - 2 \mid k - 1 \mid^{3/2} \Biggr) \\ 
\noalign{\vskip 3mm} & = & \displaystyle \frac{T^{3/2}}{n^{3/2}} \;
\sum_{k^{^{\prime }}=0}^{j-1} \; \Biggl( (k^{^{\prime }} +1)^{3/2} - 2 \;
k^{^{\prime }3/2} + \mid k^{^{\prime }} -1 \mid^{3/2} \Biggr) \\ 
\noalign{\vskip 3mm} & = & \displaystyle \frac{T^{3/2}}{n^{3/2}} \; \Bigl( 1
+ j^{3/2} - (j-1)^{3/2} \Bigr) \\ 
\noalign{\vskip 3mm} & \leq & \displaystyle T^{3/2} \Bigg( \frac{1}{n^{3/2}}
+ \frac{1}{n^{3/2}} \; \max_{j-1 \leq x \leq j} \; \frac{d(x^{3/2})}{dx} %
\Bigg) \\ 
\noalign{\vskip 3mm} & \leq & \displaystyle \frac{5}{2 \; n} \; T^{3/2} .%
\end{array}%
\end{equation*}

\bigskip Hence combining equality (\ref{eq:30}) with the above result, we
obtain

\begin{equation}  \label{eq:31}
\lambda^{-1} \geq \alpha \, n,
\end{equation}
where $\displaystyle \alpha = \frac{2}{T(2a^2 + 5b^2T^{1/2})} .$\newline

Next, let us determine a suitable lower bound of $\| m \|_2^2$. From the
lemma \ref{L:13} we have

\begin{equation}  \label{eq:32}
\begin{array}{rcl}
\displaystyle \| m \|_2^2 & = & \displaystyle \sum_{k=1}^j \; \Bigl( Cov %
\Bigl( \Delta_{j+1}^n S^{\frac{3}{4}}, \Delta_k^n S^{\frac{3}{4}} \Bigr) %
\Bigr)^2 \\ 
\noalign{\vskip 3mm} & = & \displaystyle \frac{T^3b^4}{4n^3} \sum_{k=1}^j \; %
\Bigl( f_1(k) - f_2(k) \Bigr) ^2,%
\end{array}%
\end{equation}

\noindent where 
\begin{equation*}
f_1(k) = (j-k+2)^{3/2} - 2 (j-k+1)^{3/2} + (j-k)^{3/2}
\end{equation*}
\noindent and 
\begin{equation*}
f_2(k) = (j+k+1)^{3/2} - 2 (j+k)^{3/2} + (j+k-1)^{3/2}.
\end{equation*}

\bigskip

The functions $f_1$ and $f_2$ satisfy three properties, which we shall use at the end of the proof. We will state them in the following technical lemma.

\bigskip
\begin{lemma} \label{L:29}  For any $ k \in \{1,..,j \}$ \\
\begin{itemize}
\item
$ f_1(k)  \geq 0 $ and $ f_2(k) \geq 0$, 
\item
$  f_1(k)  - f_2(k) > 0$,
\item
 \begin{equation} \label{eq:33}
  f_1(k)  - f_2(k) \geq \frac{3}{4} \; \Bigl( (j-k+1)^{-1/2} - (j+k-1)^{-1/2} \Bigr) \geq 0.
  \end{equation}  
\end{itemize}
\end{lemma}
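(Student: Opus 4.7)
The plan is to unify the three assertions by writing both $f_1(k)$ and $f_2(k)$ as values of the second-difference operator applied to $g(x) = x^{3/2}$. Setting $a = j-k+1$ and $b = j+k$, we have $f_1(k) = \phi(a)$ and $f_2(k) = \phi(b)$, where $\phi(x) := g(x+1) - 2g(x) + g(x-1)$. Since $g \in C^2(0,\infty)$ with $g''(y) = \tfrac{3}{4} y^{-1/2}$, iterated integration yields the key representation
\begin{equation*}
\phi(x) \;=\; \int_0^1\!\!\int_0^1 g''(x-1+s+t)\,ds\,dt \;=\; \frac{3}{4}\int_{-1}^{1} (1-|r|)\,(x+r)^{-1/2}\,dr,
\end{equation*}
valid for all $x \geq 1$ (the singularity at $r=-1$ when $x=1$ is integrable because $(1-|r|)$ vanishes linearly there).

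From this representation the first assertion is immediate: the integrand is non-negative, so $f_1(k),f_2(k) \geq 0$. For the second assertion, note that for $k \geq 1$ we have $a = j-k+1 \leq j < j+k = b$, so $(a+r)^{-1/2} > (b+r)^{-1/2}$ pointwise on $[-1,1]$, hence $\phi(a) > \phi(b)$, giving $f_1(k) - f_2(k) > 0$.

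For the third assertion, I would use two complementary bounds on $\phi$. Observe that $(1-|r|)\,dr$ is a probability measure on $[-1,1]$ with mean $0$. Since $r \mapsto (x+r)^{-1/2}$ is convex on $(-x, \infty)$, Jensen's inequality gives
\begin{equation*}
\phi(a) \;\geq\; \frac{3}{4}\,a^{-1/2} \;=\; \frac{3}{4}\,(j-k+1)^{-1/2}.
\end{equation*}
Conversely, the integrand $(b+r)^{-1/2}$ attains its maximum on $[-1,1]$ at $r=-1$, so the crude bound
\begin{equation*}
\phi(b) \;\leq\; \frac{3}{4}(b-1)^{-1/2}\int_{-1}^{1}(1-|r|)\,dr \;=\; \frac{3}{4}\,(j+k-1)^{-1/2}
\end{equation*}
holds. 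Subtracting the two yields exactly \eqref{eq:33}. The non-negativity of the right-hand side of \eqref{eq:33} follows from $j-k+1 \leq j+k-1$ for $k \geq 1$.

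The main technical obstacle will be justifying the integral representation and Jensen step at the boundary value $k=j$ (equivalently $a=1$), where the weight $(1-|r|)(1+r)^{-1/2}$ becomes unbounded near $r=-1$; a direct computation shows $\phi(1) = 2\sqrt{2}-2$ matches both the integral value and the Jensen bound $\tfrac{3}{4}$, so the limiting argument is harmless. Everything else is a routine convexity computation, with no new monotonicity lemmas required thanks to the uniform representation of $\phi$.
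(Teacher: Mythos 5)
Your proof is correct, and it reaches the paper's two key quantitative estimates, $f_1(k)\ge \frac34\,(j-k+1)^{-1/2}$ and $f_2(k)\le \frac34\,(j+k-1)^{-1/2}$, by a genuinely different mechanism. The paper treats the three bullets with three separate elementary arguments: positivity of $f_1,f_2$ from convexity of $x\mapsto x^{3/2}$; strict inequality from the fact that $x\mapsto (x+1)^{3/2}-2x^{3/2}+(x-1)^{3/2}$ is decreasing on $[1,\infty)$ (concavity of $x^{1/2}$); and the quantitative bound from the auxiliary function $g_a(x)=(a+x)^{3/2}-2a^{3/2}+(a-x)^{3/2}$, writing $g_a(1)=\frac12 g_a''(c)$ by Taylor--Lagrange and squeezing $g_a''(c)$ between $g_a''(0)$ and $g_a''(1)$ by monotonicity of $g_a''$. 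You instead establish once and for all the exact representation of the second difference, $\phi(x)=\frac34\int_{-1}^{1}(1-|r|)(x+r)^{-1/2}\,dr$, i.e.\ the integral (rather than Lagrange) form of the remainder, and read off all three bullets from it: positivity of the integrand, strict pointwise monotonicity in $x$ (using $j-k+1<j+k$), Jensen with the centered triangular probability measure for the lower bound on $f_1(k)=\phi(j-k+1)$, and the trivial sup-bound at $r=-1$ for the upper bound on $f_2(k)=\phi(j+k)$. What your route buys is uniformity and economy: one identity replaces three ad hoc monotonicity/convexity lemmas, and your handling of the boundary case $a=1$ is sound (the weighted integrand $(1-|r|)(1+r)^{-1/2}$ is in fact bounded near $r=-1$, and the direct check $\phi(1)=2\sqrt2-2\ge \frac34$ settles it in any case). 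What the paper's route buys is that it stays within first-year calculus (mean value form of Taylor's theorem and sign of derivatives), with no integral representation or Jensen inequality needed. Both arguments are complete and yield exactly inequality (\ref{eq:33}), whose right-hand side is nonnegative in either treatment because $j-k+1\le j+k-1$ for $k\ge 1$.
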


\bigskip
\begin{proof} (of lemma \ref{L:29}) The first assertion of the lemma is due to the fact that the function $\displaystyle x
\longmapsto x^{3/2}$ is convex on the interval $\displaystyle \lbrack 0, +
\infty [$.

\bigskip
Now, let us prove the second assertion of the lemma. Consider the function $g$ defined by 
\begin{equation*}
g(x) = (x+1)^{3/2} - 2 \; x^{3/2} + (x - 1)^{3/2}, \; \; x \geq 1.
\end{equation*}
Since the function $\displaystyle x \longmapsto x^{1/2}$ is concave on $%
\displaystyle \lbrack 1, + \infty [$, $g$ decreases on this interval and
consequently 
\begin{equation*}
f_1(k) = g(j-k+1) > g(j+k) = f_2(k) .
\end{equation*}

\bigskip
Finally, let us prove inequality (\ref{eq:33}). For every $a \ge 1$, let us consider the function $g_a$ defined by 
\begin{equation*}
g_a(x) = (a+x)^{3/2} - 2 \; a^{3/2} + (a - x)^{3/2}, \; \; 0 \leq x \leq 1
\leq a.
\end{equation*}

We have $\displaystyle \; g_a(0) = 0, g_a^{^{\prime }}(x) = \frac{3}{2} \; (
(a+x)^{1/2} - (a-x)^{1/2} )$ and therefore $g_a^{^{\prime }}(0) = 0$.\newline

On the otherhand, by Taylor-Lagrange theorem, we get that there exists $c
\in ]0,1[$ such that 
\begin{equation*}
g_a(1) = g_a(0) + g_a^{^{\prime }}(0) + \frac{1}{2} g_a^{"}(c) = \frac{1}{2}
g_a^{"}(c),
\end{equation*}
\noindent 
where 
$$
g_a^{"}(x) = \frac{3}{4} \; \Bigl( (a+x)^{-1/2} + (a - x)^{-1/2} \Bigr).
$$

Next, it is easy to check that the function $g_a^{"}$ increases, and
consequently 
\begin{equation*}
g_a^{"}(0) \leq g_a^{"}(c) \leq g_a^{"}(1).
\end{equation*}
So, we have 
\begin{equation*}
\frac{3}{ 4 \; a^{1/2}} \leq g_a(1) \leq \frac{3}{4 \;(a-1)^{1/2}},
\end{equation*}
and therefore
\begin{equation*}
f_1(k) = g_{j-k+1}(1) \geq \frac{3}{4} \, (j-k+1)^{-1/2} \; \; \; \mbox{and}
\; \; \; f_2(k) = g_{j+k} (1) \leq \frac{3}{4} \, (j+k-1)^{-1/2},
\end{equation*}
\noindent which ends the proof of the lemma. 

\end{proof}

\bigskip
Let us turn back to the proof of proposition \ref{p:28}. Combining (\ref%
{eq:32}) with (\ref{eq:33}), we get

\begin{equation}  \label{eq:34}
\| m \|_2^2 \geq \frac{9 \, b^4 \, T^3}{64 \, n^3} \sum_{k=2}^j \; \Bigl( %
(j-k+1)^{-1/2} - (j+k-1)^{-1/2} \Bigr)^2 .
\end{equation}

\bigskip

For every integer $j \ge 1$, let us consider the function 
\begin{equation*}
f_j(x) = (j-x+1)^{-1/2} - (j+x-1)^{-1/2}, \; 1 \leq x \leq j .
\end{equation*}

Since $f_j$ increases, we have

\begin{equation}  \label{eq:35}
\sum_{k=2}^j \; \Bigl( (j-k+1)^{-1/2} - (j+k-1)^{-1/2} \Bigr)^2 \geq
\int_1^j \; f_j(x)^2 \; dx .
\end{equation}

But

\begin{equation}  \label{eq:36}
\begin{array}{rcl}
\displaystyle \int_1^j \; f_j(x)^2 \; dx & = & \displaystyle \int_1^j \; %
\Biggl( \frac{1}{j-x+ 1} + \frac{1}{j+x-1} - 2 \frac{1}{\sqrt{j^2- (x-1)^2}} %
\Biggr) \; dx \\ 
\noalign{\vskip 3mm} & = & \displaystyle \ln (2 j -1) -2 \; \int_1^j \; 
\frac{1}{\sqrt{j^2- (x-1)^2}} \; dx \\ 
\noalign{\vskip 3mm} & = & \displaystyle \ln (2 j -1) + 2 \arccos \Bigl( 
\frac{j-1}{j} \Bigr) - \pi.%
\end{array}%
\end{equation}

\bigskip Hence, combining (\ref{eq:31}) with (\ref{eq:34}), (\ref{eq:35}) and (\ref{eq:36}), we get

\begin{equation}  \label{eq:37}
\| m \|_2^2 \; \lambda^{-1} \geq \frac{\beta }{n^2} \; \Bigl( \ln (2 j -1) +
2 \arccos \Bigl( \frac{j-1}{j} \Bigr) - \pi \Bigr) ,
\end{equation}

where $\displaystyle \beta = \frac{\alpha }{64} (9 \; T^3 \; b^4).$

\bigskip
Combining (\ref{eq:37}) with (\ref{eq:29}), we have, 
\begin{equation*}
\sum_{j=1}^{n-1} \; \| E \Bigl( \Delta_{j+1}^n S_{3/4} \mid \Delta_j^n
S_{3/4},.., \Delta_1^n S_{3/4}\Bigr) \|_2
\end{equation*}
\begin{equation*}
\geq \frac{\sqrt{\beta }}{n} \; \sum_{j=1}^{n-1} \; \sqrt{ \ln (2 j -1) + 2
\arccos \Bigl( \frac{j-1}{j} \Bigr) - \pi}.
\end{equation*}

\bigskip 
Since $\displaystyle \lim_{n \rightarrow \infty } \sqrt{ \ln (2 n
-1) + 2 \arccos \Bigl( \frac{n-1}{n} \Bigr) - \pi} = + \; \infty $, we have by C\'{e}saro
theorem
\begin{equation*}
\lim_{n \rightarrow \infty } \frac{\sqrt{\beta }}{n} \; \sum_{j=1}^{n-1} \; 
\sqrt{ \ln (2 j -1) + 2 \arccos \Bigl( \frac{j-1}{j} \Bigr) - \pi} = + \; \infty ,
\end{equation*}
which completes the proof of proposition \ref{p:28}.

\end{proof}

\newpage

\bigskip
\noindent
Charles EL-NOUTY

\bigskip
\noindent
LAGA, universit\'{e} Paris XIII, 99 avenue J-B Cl\'{e}ment, 93430 Villetaneuse,
FRANCE

\bigskip
\noindent
Email: elnouty@math.univ-paris13.fr

\vspace{2cm}
\noindent
 Mounir ZILI

\bigskip
\noindent
Preparatory Institute to the Military Academies, Research unit
UR04DN04, Avenue Mar\'echal Tito, 4029 Sousse, TUNISIA

\bigskip
\noindent
Email: zilimounir@yahoo.fr

\end{document}